\def\@seccntformat#1{\hspace*{0mm}%
 \protect\textup{\protect\@secnumfont
   \ifnum\pdfstrcmp{subsection}{#1}=0 \bfseries\fi
   \csname the#1\endcsname
   \protect\@secnumpunct
     }%
}
\numberwithin{equation}{section}
\newtheorem{theorem}{Theorem}[section]
\newtheorem{corollary}[theorem]{Corollary}
\theoremstyle{definition}
\newtheorem{remark}{Remark}[section]
\newcommand{\assign}{:=}
\newcommand{\mathd}{\mathrm{\, d}}
\newcommand{\of}{:}
\newcommand{\suchthat}{:}
\newcommand{\tmabbr}[1]{#1}
\newcommand{\tmem}[1]{{\em #1\/}}
\newcommand{\tmop}[1]{\ensuremath{\operatorname{#1}}}
\newcommand{\pp}{p}
\newcommand{\qq}{q}
\newcommand{\Om}{\Omega}
\newcommand{\Ndim}{N}
\newcommand{\divv}{\mathrm{div}}
\newcommand{\grad}{\nabla}
\newcommand{\Stwo}{\mathbb{S}}
\newcommand{\RR}{\mathbb R}
\newcommand{\NN}{\mathbb N}
\newcommand{\eqs}{\hspace{0.1em}=\hspace{0.1em}}
\begin{document}

\title[A unified divergent approach to Hardy--Poincar{\'e} Inequalities]{A unified divergent approach to Hardy--Poincar{\'e} Inequalities
in classical and variable Sobolev Spaces}

\author{Giovanni Di Fratta}
\address{Giovanni Di Fratta\\ Institute for Analysis and Scientific Computing, TU Wien, Wiedner
Hauptstrae 8-10, 1040 Wien, Austria}
\email{giovanni.difratta@asc.tuwien.ac.at}

\author{Alberto Fiorenza}
\address{Alberto Fiorenza \\
Dipartimento di Architettura \\
Universit\`{a} di Napoli Federico II \\ Via Monteoliveto, 3 \\
I-80134 Napoli, Italy \\
and Istituto per le Applicazioni del Calcolo
``Mauro Picone", sezione di Napoli \\
Consiglio Nazionale delle Ricerche \\
via Pietro Castellino, 111 \\
I-80131 Napoli, Italy}
\email{fiorenza@unina.it}

\keywords{Poincaré inequality, Hardy inequality, weighted Sobolev inequality, variable Sobolev spaces}
\subjclass{26D10, 35A23, 46E35}

\begin{abstract}
We present a unified strategy to derive Hardy--Poincar{\'e} inequalities on bounded and unbounded domains.  The approach allows proving a general Hardy--Poincar{\'e} inequality from which the classical Poincaré and Hardy inequalities immediately follow. The idea also applies to the more general context of variable exponent Sobolev spaces. The argument, concise and constructive, does not require a priori knowledge of compactness results and retrieves geometric information on the best constants.
\end{abstract}

\maketitle

\section{Introduction}

{\noindent}In its classical form, Poincar{\'e} inequality states that if $\Om$
is an open and {\tmem{bounded}} subset of $\RR^{\Ndim}$, $\Ndim \geqslant 1$,
and $\pp \in [1, \infty)$, then there exists a positive constant $c_{\Om,
\pp}$, depending only on $\pp$ and $\Om$ (in particular, on $\Ndim$), such that
\begin{equation}
\left( \int_{\Om} | u (x) |^{\pp} \mathd x \right)^{1 / \pp} \,
\leqslant  \, c_{\Om, \pp } \left( \int_{\Om} \left| \grad u (x) 
\right|^{\pp} \mathd x \right)^{1 / \pp} \quad \forall u \in C^{\infty}_c (\Om
), \label{eq:PoincareIntro}
\end{equation}
where $C^{\infty}_c (\Om )$ is the space of infinitely differentiable functions with
compact support in $\Om$, and $\left| \grad u \right|^{\pp} = \sum_{i =
1}^{\Ndim} | \partial_i u |^{\pp}$.

The multidimensional Hardy inequality
states that if $\Om$ is an open subset of $\RR^{\Ndim}$ (possibly
{\tmem{unbounded}}), $\pp \in [1, \infty)$, and $\Ndim > \pp$ (therefore,
necessarily $\Ndim \geqslant 2$), then there exists a positive constant
$c_{\Om, \pp }$, depending only on $\Om$ and $\pp$, such that
\begin{equation}
\left( \int_{\Om} \frac{| u (x) |^{\pp}}{| x |^{\pp}} \mathd x
\right)^{\frac{1}{\pp}}  \, \leqslant  \, c_{\Om, \pp
} \left( \int_{\Om} \left| \grad u (x) \right|^{\pp} \mathd x
\right)^{\frac{1}{\pp}} \quad \forall u \in C^{\infty}_c (\Om )
. \label{eq:HardyIntro}
\end{equation}
The version of {\eqref{eq:PoincareIntro}} reported here dates back to
Steklov~{\cite{Kuznetsov2014}} but originates in the work of
Poincar{\'e}~{\cite{Poincare1890}}, where the inequality is established in the
class of smooth functions that have zero mean in $\Om$. Likewise,
{\eqref{eq:HardyIntro}} dates back to Leray {\cite{Leray1933}} and is the
multidimensional analog of the one-dimensional Hardy
inequality~{\cite{Hardy1901}} (see
{\cite{Kuznetsov2014,Kufner2006,Naumann2010,Ruzhansky2019}} for some
historical details).

Over the years, {\eqref{eq:PoincareIntro}} and
{\eqref{eq:HardyIntro}} have been intensively investigated, and various
extended and refined versions have been derived to cover different
functional settings (see, e.g., \cite{Muckenhoupt1972, Brezis1997, Brezis2000, Dolbeault2012, Acosta2017}). Also, the knowledge
of sharp constants in such functional inequalities (e.g., the minimal
constants $c_{\Om, \pp }$ for which {\eqref{eq:PoincareIntro}} or
{\eqref{eq:HardyIntro}} holds), or even upper bounds on the optimal constants
which explicitly highlight their dependence on the geometry of the domain, have
remarkable applications in the Analysis of PDEs and Numerics (see, e.g.,
{\cite{Michlin1981,Dautray1990,Chou1993,GarciaAzorero1998,Verfuerth1999, Vazquez2000, Gazzola2004,Blanchet2007,Di_Fratta_2012,Kuznetsov2015,DiFratta2019}}). 
The literature on Hardy and Poincaré inequalities is endless; for further sources and developments, we refer the reader to the comprehensive presentations in
{\cite{Kufner2017,Ruzhansky2019,Edmunds2004}} for Hardy inequalities and
{\cite[Chap.~4]{Ziemer89}} for Poincar{\'e}-type inequalities.

This paper aims to present a unified strategy to derive Hardy--Poincar{\'e}
inequalities through a concise argument based on the divergence theorem. Our
approach extends to the more general context of variable exponent Sobolev
spaces {\cite{Diening2011,CruzUribe2014}}. Unified frameworks to treat such
inequalities are of some interest. In {\cite{Ziemer89}} a whole
chapter is devoted to a unified approach to Poincar{\'e} inequalities. The
underlying argument proceeds by contradiction and is based on
Rellich--Kondrachov compactness theorem; while the approach is both elegant
and simple, it relies on a compactness argument that requires workarounds to deal with unbounded domains where, in general, one lacks compactness. Moreover, the nonconstructive nature of the
approach produces a loss of information on the geometric content of the
Poincar{\'e} constant, which can be only partially recovered by scaling
arguments. Instead, our approach, more in the spirit of
{\cite{Avkhadiev2007,Barbatis2004,Gazzola2004,Takahashi2015}}, is constructive: it does not require a priori knowledge of compactness results and returns geometric
information on the Hardy--Poincaré constants. In some cases, it also provides the best possible constant (cf.~Remark~\ref{rmk:Hardysharp})

Our arguments' main idea came when the authors were analyzing some aspects of
the so-called {\tmem{demagnetizing field}}, which is the primary source of
nonlocal interactions in the variational theory of micromagnetism
{\cite{DiFratta2016,DiFratta2020}}. For a magnetization $m \in L^2(
\RR^3, \RR^3)$ the demagnetizing field $h [m]$ induced by $m$ is given
by $h [m] \assign \nabla u_m$ with $u_m$ the demagnetizing potential which
solves
\begin{equation}
- \Delta u_m \eqs \tmop{div} m \text{ \ in \ } \mathcal{D}' (\RR^3) . \label{eq:potential}
\end{equation}
Lax-Milgram lemma guarantees that equation (\ref{eq:potential}) possesses a
unique solution in the Beppo Levi space $B_{} L^1_0 ( \RR^3 )$,
where for every open subset $\Om \subseteq \RR^3$
({\tmabbr{cf.}}~{\cite[Chap.~XI, Part~B]{Dautray1990}}, see also
{\cite{Deny1954,Deny1955,MR1611383}})
\[ B_{} L^1 (\Om ) \assign \left\{ u \in \mathcal{D}' (\Om
) \; \suchthat \; \frac{u (\cdot)}{\sqrt{1 + \left| \, \cdot \,
\right|^2}} \in L^2 (\Om ) \text{ \ and \ } \nabla u \in L^2
(\Om, \RR^3 ) \right\}, \]
and $B_{} L^1_0 (\Om )$ is the closure of $C_c^{\infty} (\Om
)$ in $B_{} L^1 (\Om )$. These spaces are Hilbert spaces
when endowed with the norm:
\begin{equation}
\| u \|^2_{B_{} L^1 (\Om )} = \int_{\Om} \frac{| u (x) |^2}{1 +
| x |^2} \mathd x  + \int_{\Om} | \nabla u (x) |^2 \mathd x .
\label{eq:gradnorm}
\end{equation}
The only (a~priori) nontrivial point in the application of the Lax--Milgram
lemma to the weak formulation of {\eqref{eq:potential}} is in the observation
that the gradient seminorm $\left\| \grad u \right\|_{L^2 (\Om )}$
is actually a norm on $B_{} L^1_0 (\Om )$ equivalent to
{\eqref{eq:gradnorm}}. For that, one has to prove something less than Hardy
inequality {\eqref{eq:HardyIntro}}, namely that
\begin{equation}
\int_{\Om} \frac{| u (x) |^2}{1 + | x |^2} \mathd x \leqslant c_{\Om, \pp
}^2 \int_{\Om} | \nabla u (x) |^2 \mathd x \quad \forall u \in C_c^{\infty}
(\Om ) . \label{eq:BLineq}
\end{equation}
A proof of {\eqref{eq:BLineq}} is given in \ {\cite[Thm.~1,
p.~114]{Dautray1990}} where these weighted Sobolev spaces are covered in the
three-dimensional case and applied to the study of integral equations
associated with elliptic boundary value problems in exterior domains of
$\RR^3$. Again, the proof is by contradiction and based on a localization
argument that allows for compactness results.

The attempt to find a simplified proof of {\eqref{eq:BLineq}} led us to
Theorem~\ref{thm:mainfixexpomgen}. An accurate bibliographic search reveals
that, when $\pp = 2$, divergence theorem is adopted in {\cite[Thm~7.61, p.~466]{Salsa2016}} to
give a proof of the classical Poincar{\'e} inequality
{\eqref{eq:PoincareIntro}} in $H^1_0(\Om)$  and is also recognizable behind
some computations in {\cite{Payne1960}} (again in the case $\pp = 2$). Our
paper moves forward in this direction and shows how the idea can be used to
derive Hardy--Poincar{\'e} inequalities in more general settings: Poincar{\'e}
inequalities on domains bounded in one direction, weighted Hardy--Poincar{\'e}
inequalities on general domains, and Poincar{\'e} inequalities in variable
exponent Sobolev spaces.
In particular, in
Theorem~\ref{thm:VASBPI}, we derive modular Hardy--Poincar{\'e} inequalities
suited for variable exponent Sobolev spaces. Our results complement the remarkable findings obtained in {\cite[Thm.~3.1]{Fan2005}} which, roughly speaking, claim that Poincaré inequality cannot hold in a bounded domain $\Om$ of $\RR^N$ if the variable exponent $\pp (\cdot)$ is radial with respect to a point $x_0 \in \RR^N$ (i.e., $\pp (x) = \pp (| x - x_0 |)$) and the profile of $\pp$ is
decreasing. Instead, we show that this is the case provided that one restricts the class of competitors to the space of functions $u \in C_c^{\infty} (\Om)$ such
that $| u | \leqslant 1$ in $\Om$. Moreover, our result holds even if $\Om$ is unbounded, and returns explicit dependence of the modular Poincaré constant in terms of the geometry of the domain and $\pp(\cdot)$.

The paper is organized as follows. In Section~\ref{sec:PIBOneDirection}, we
give a short proof of the Poincar{\'e} inequality on domains that are bounded
in one direction. The result is well-known, but the proof we present allows us to show
the main ideas in a clear way, and also to discuss some issues on the
optimality of the Poincar{\'e} constant. In Section~\ref{sec:HPinequalities},
we present a general Hardy--Poincar{\'e} inequality from which well-known
functional inequalities follow as corollaries. In particular, the sharp
multidimensional inequality in $\RR^{\Ndim}$. Finally, in
Section~\ref{sec:HPinequVarSobsp}, we derive modular Hardy--Poincar{\'e} inequalities
suited for variable exponent Sobolev spaces.

\section{Poincar{\'e} inequality on domains bounded in one
direction}\label{sec:PIBOneDirection}
\begin{figure}[t]
\includegraphics[scale=1]{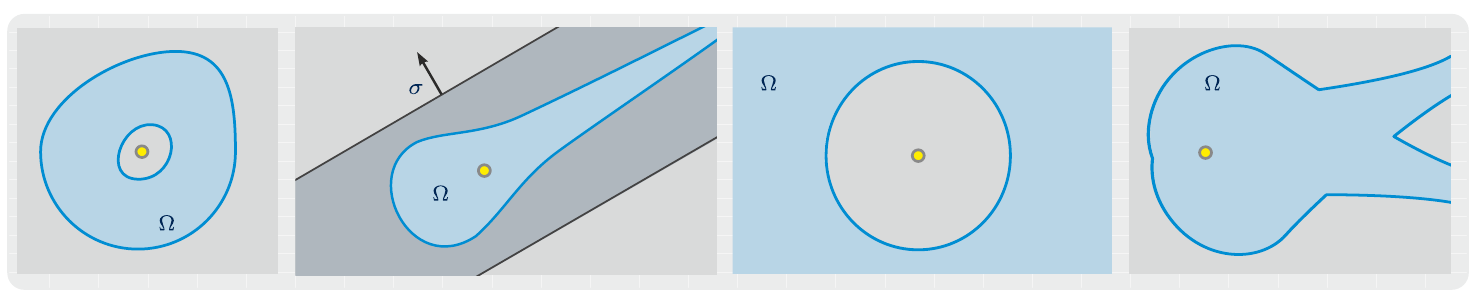}
\caption{Geometrically, a domain is bounded along the $\sigma$-direction if it is absorbed by a strip whose boundary (consisting of two parallel hyperplane) is perpendicular to $\sigma$. From left to right, examples of domains in $\RR^2$:
$\Om$ bounded (in all directions); $\Om$ unbounded but bounded in the $\sigma$ direction; $\Om$ unbounded (and not bounded in any direction); $\Om$ unbounded, but representable as the union of two open sets that are bounded in one direction.}
 \label{Fig:BD}
\end{figure}
{\noindent}Let $\sigma \in \Stwo^{\Ndim - 1}$. We say that an open subset $\Om
\subseteq \RR^{\Ndim}$ is bounded along the $\sigma$-direction if
\begin{equation}
\sup_{x \in \Om} | x \cdot \sigma | < + \infty .
\end{equation}
We say that $\Om$ is bounded in {\tmem{one}} direction if there exists a
direction $\sigma \in \Stwo^{\Ndim - 1}$ along which $\Om$ is bounded. Geometrically, a domain is bounded along the $\sigma$-direction if it is absorbed by a strip whose boundary (consisting of two parallel hyperplane) is perpendicular to $\sigma$ (see~Figure~\ref{Fig:BD}).

It is well-known that if $\Om$ is bounded along a direction, then the
Poincar{\'e} inequality holds in $W^{1, p}_0 (\Om )$ (see
{\cite[Theorem~5.3.1, p.~161]{Attouch2014}}. Precisely, if the open subset $\Om
\subseteq \RR^{\Ndim}$ is bounded in the $\sigma$-direction for some $\sigma
\in \Stwo^{\Ndim - 1}$, then there exists a constant $c_{\Om}(\pp) > 0$, depending only on $\pp$ and $\Om$ (in particular, on $\Ndim$), such that
\begin{equation}
\| u \|_{L^{\pp} (\Om )}  \leqslant c_{\pp, \Om} \left\|
\grad u \right\|_{L^{\pp} (\Om )} \quad \forall u \in W^{1, p}_0
(\Om ) \label{eq:PoincIneqButtazzo}
\end{equation}
with
\begin{equation}
\left\| \grad u \right\|_{L^{\pp} (\Om )}^{\pp} = \sum_{i \in
\NN_{\Ndim}} \int_{\Om} | \partial_i u |^{\pp} \mathd x.
\end{equation}
A common proof of {\eqref{eq:PoincIneqButtazzo}} consists of an argument by
contradiction and is based on Rellich--Kondrachov compactness theorem (see,
e.g., {\cite{Brezis2011,Ziemer89}}). Another way to infer
{\eqref{eq:PoincIneqButtazzo}} is by aligning the $\sigma$-direction to
one of the coordinates axes, and then applying the one-dimensional fundamental
theorem of calculus to get estimates of $u$ in terms of its
first-order derivatives (see, e.g., {\cite{Attouch2014,Leoni2017}}). From this
perspective, we can say that our approach is based on the fundamental theorem
of calculus in $\Ndim$-dimensions and applied to a vector-weighted version of
$u$, which allows simplifying the computations and giving unified arguments
regardless of $\Om$ being bounded or not. To clarify what we mean, we give a
concise proof of a more general version of {\eqref{eq:PoincIneqButtazzo}},
which will clarify our arguments common strategy.

\begin{theorem}
\label{thm:Poincarebound1d}Suppose that $\Om \subseteq \RR^{\Ndim}$ is open
and bounded in the $\sigma$-direction for some $\sigma \in \Stwo^{\Ndim -
1}$. Then, for every $\pp \in [1, + \infty)$ we have
\begin{equation}
\| u \|_{L^{\pp} (\Om )}  \leqslant \pp c_{\Om, \sigma} \|
\partial_{\sigma} u \|_{L^{\pp} (\Om )} \quad \forall u \in
W^{1, p}_0 (\Om ), \label{eq:PoincareDifFio}
\end{equation}
with
\begin{equation}
c_{\Om, \sigma} \assign \inf_{x_0 \in \RR^{\Ndim}} \sup_{x \in \Om} | (x -
x_0) \cdot \sigma | \label{eq:exprconstbounded1d}
\end{equation}
depending only on the projection of $\Om$ onto the $\sigma$-direction.
\end{theorem}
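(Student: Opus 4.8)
The plan is to run the divergence-theorem argument advertised in the introduction: test the identity $\int_\Om \divv F\,\mathd x = 0$ against a scalar-weighted version of $|u|^{\pp}$ built out of the affine coordinate $x\cdot\sigma$. By density of $C_c^{\infty}(\Om)$ in $W^{1,\pp}_0(\Om)$ and the fact that $u\mapsto\partial_\sigma u=\sum_i\sigma_i\partial_i u$ is a bounded linear map from $W^{1,\pp}(\Om)$ into $L^{\pp}(\Om)$, it suffices to establish \eqref{eq:PoincareDifFio} for $u\in C_c^{\infty}(\Om)$; the general case follows by passing to the limit. Note that only compactly supported fields will be integrated, so no regularity of $\partial\Om$ is ever used, which is exactly what makes the argument indifferent to $\Om$ being bounded or not.

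Fix $x_0\in\RR^{\Ndim}$ and set $w(x)\assign(x-x_0)\cdot\sigma$, an affine function with $\partial_\sigma w\equiv\sigma\cdot\sigma=1$. Apply the divergence theorem to $F\assign|u|^{\pp}\,w\,\sigma\in C_c^{\infty}(\Om,\RR^{\Ndim})$: since $\divv F=w\,\partial_\sigma(|u|^{\pp})+|u|^{\pp}\,\partial_\sigma w=w\,\partial_\sigma(|u|^{\pp})+|u|^{\pp}$ and $\int_\Om\divv F\,\mathd x=0$, we obtain
\[
\int_\Om|u|^{\pp}\,\mathd x\eqs-\int_\Om w\,\partial_\sigma(|u|^{\pp})\,\mathd x\,\leqslant\,\pp\,\Big(\sup_{x\in\Om}|w(x)|\Big)\int_\Om|u|^{\pp-1}\,|\partial_\sigma u|\,\mathd x,
\]
where I used the pointwise bound $|\partial_\sigma(|u|^{\pp})|\leqslant\pp\,|u|^{\pp-1}|\partial_\sigma u|$, valid a.e.\ for every $\pp\in[1,\infty)$ (for $\pp=1$ one reads the left-hand side as $0$ on $\{u=0\}$, or simply replaces $|u|$ by $(u^2+\varepsilon^2)^{1/2}-\varepsilon$ and lets $\varepsilon\to0^+$).

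Next, Hölder's inequality with conjugate exponents $\pp/(\pp-1)$ and $\pp$ (the step being vacuous when $\pp=1$) gives $\int_\Om|u|^{\pp-1}|\partial_\sigma u|\,\mathd x\leqslant\|u\|_{L^{\pp}(\Om)}^{\pp-1}\,\|\partial_\sigma u\|_{L^{\pp}(\Om)}$, so that
\[
\|u\|_{L^{\pp}(\Om)}^{\pp}\,\leqslant\,\pp\,\Big(\sup_{x\in\Om}|(x-x_0)\cdot\sigma|\Big)\,\|u\|_{L^{\pp}(\Om)}^{\pp-1}\,\|\partial_\sigma u\|_{L^{\pp}(\Om)}.
\]
Dividing by $\|u\|_{L^{\pp}(\Om)}^{\pp-1}$ (the inequality being trivial when $u=0$ a.e.) and then taking the infimum over all $x_0\in\RR^{\Ndim}$ yields \eqref{eq:PoincareDifFio} with the constant \eqref{eq:exprconstbounded1d}. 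Since $w$ depends on $x$ only through $x\cdot\sigma$, that infimum — hence $c_{\Om,\sigma}$ — depends only on the orthogonal projection of $\Om$ onto the line $\RR\sigma$ (in fact it equals half the diameter of that projection, finite precisely because $\Om$ is bounded in the $\sigma$-direction).

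I do not expect a genuine obstacle here: the only points needing care are the chain rule for $|u|^{\pp}$ at the endpoint $\pp=1$ (handled by the $\varepsilon$-regularization above) and the legitimacy of $\int_\Om\divv F\,\mathd x=0$, which is immediate since $F\in C_c^{\infty}(\Om,\RR^{\Ndim})$. The passage from $C_c^{\infty}(\Om)$ to $W^{1,\pp}_0(\Om)$ is routine density, and the optimization over $x_0$ is the one non-obvious ingredient, as it upgrades the crude bound $\sup_{x\in\Om}|x\cdot\sigma|$ to the translation-invariant quantity $c_{\Om,\sigma}$.
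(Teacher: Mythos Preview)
Your proof is correct and follows essentially the same route as the paper: apply the divergence theorem to the vector field $F=|u|^{\pp}\,((x-x_0)\cdot\sigma)\,\sigma$, then H\"older, then optimize over $x_0$. The only cosmetic differences are that the paper treats the case $\pp=1$ by passing to the limit $\pp\to1^+$ in the inequality already proved for $\pp>1$ (rather than via your $\varepsilon$-regularization of $|u|$), and that your claim $F\in C_c^{\infty}$ should read $F\in C_c^{1}$ when $\pp>1$ is not an even integer --- a harmless slip, since only $C^1$ regularity is needed for the divergence theorem.
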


\begin{remark}
The domain on the right in Figure~\ref{Fig:BD} is not bounded in one direction. However, it is representable as a finite union of open sets that are bounded in one direction. For these domains, something can still be said, and we refer the reader to Agmon's book for details (cf.~\cite[Lemma~7.4, p.~75]{Agmon2010}).
\end{remark}

\begin{remark}
\label{rmk:fromsigmatograd} In particular, if $\Om$ is
bounded, then given any (not necessarily orthogonal) unit basis
$(\sigma_i)_{i \in \NN_{\Ndim}}$ of $\RR^{\Ndim}$, we get that
\begin{equation} \label{eq:partcasePI}
\| u \|_{L^{\pp} (\Om )} \, \leqslant
\, \pp c_{\Om, \sigma_{\ast}} \| \partial_{\sigma_{\ast}} u
\|_{L^{\pp} (\Om )} \, \leqslant \,
\pp c_{\Om, \sigma_{\ast}} \left\| \grad u \right\|_{L^{\pp} (\Om
)},
\end{equation}
with $\sigma_{\ast} \assign \tmop{argmin}_{i \in \NN_{\Ndim}} c_{\Om,
\sigma_i}$, as well as
\begin{equation}
\| u \|_{L^{\pp} (\Om )} \, \leqslant
\, \frac{\pp}{\Ndim^{1 / \pp}} \left( \sum_{i \in
\NN_{\Ndim{\color[HTML]{005500}}}} c_{\Om, \sigma_i}^{\pp} \|
\partial_{\sigma_i} u \|_{L^{\pp} (\Om )}^{\pp} \right)^{1 /
\pp} \, \leqslant \, \frac{\pp}{\Ndim^{1 / \pp}}
\left( \max_{i \in \NN_{\Ndim}} c_{\Om, \sigma_i} \right) \left\| \grad u
\right\|_{L^{\pp} (\Om )},
\end{equation}
with $\grad u = \left( \partial_{\sigma_1} u, \ldots,
\partial_{\sigma_{\Ndim}} u \right)$. Note that if $\Om$ is {\tmem{connected}} then for every $i \in \NN_N$ the projection of $\Om$ onto $\sigma_i$ defined by
\begin{equation}
\Pi_{\sigma_i} (\Om ) = \left\{ t \in \RR \of t = x \cdot
\sigma_i \text{ for some } x \in \Om \right\}
\end{equation} 
is an interval and, since
$\Pi_{\sigma_i}$ is a nonexpansive map, we have that
\begin{equation}
\tmop{diam} \Pi_{\sigma_i} (\Om ) \leqslant \tmop{diam} \Om .
\label{eq:diamOm}
\end{equation}
Therefore, if we set $c_{\Om,\sigma_\star}:=\max_{i\in\NN_n} c_{\Om,\sigma_i}$, denote by
$\eta_{\star} \in \Pi_{\sigma_{\star}} (\Om )$ the center of the
interval $\Pi_{\sigma_{\star}} (\Om )$, and by $y_{\star}$ one of the elements of $\RR^\Ndim$ such that $\Pi_{\sigma_{\star}} (y_{\star}) = \eta_{\star}$, i.e., such that $y_\star \cdot \sigma_\star=\eta_\star$, by
{\eqref{eq:diamOm}} we infer that
\begin{equation}
c_{\Om,\sigma_\star}   \leqslant   \sup_{x \in \Om} | (x - y_{\star})
\cdot \sigma_\star |\eqs \sup_{x \in \Om} | x \cdot \sigma_{\star} - \eta_{\star} | \eqs  \frac{\tmop{diam} \Pi_{\sigma_{\star}} (\Om )}{2}
 \leqslant  \frac{\tmop{diam} \Om}{2} .
\end{equation}
Overall, by \eqref{eq:partcasePI} and \eqref{eq:diamOm}  we get that if $\Om$ is {\tmem{connected}} then, for every $u \in
C^{\infty}_c (\Om )$ there holds
\begin{equation}
\| u \|_{L^{\pp} (\Om )} \, \leqslant
\, \frac{\pp}{\Ndim^{1 / \pp}} \frac{\tmop{diam} \Om}{2}
\left\| \grad u \right\|_{L^{\pp} (\Om )} .
\label{eq:Poincaresharpconn}
\end{equation}
The previous relation is the analog of the well-known sharp estimate
obtained for $\pp = 1$ in {\cite{Acosta2004}} in the class of
{\tmem{convex}} domains having a prescribed diameter. Note, however, that in
{\cite{Acosta2004}} the optimal estimate concerns the class of functions
with zero mean in $\Om$; here, we consider the class of functions vanishing
on $\partial \Om$. Nevertheless, the main difficulty in {\cite{Acosta2004}}
is the sharpening of the Poincar{\'e} constant from $\tmop{diam} \Om$ to
$\frac{1}{2} \left( \tmop{diam} \Om \right)$; this is due to the well-known
obstruction that, in general, in dimension $\Ndim \geqslant 2$, it is not true
that a convex subset of $\RR^{\Ndim}$ is contained in a ball of radius half
its diameter. Although our result overcomes this issue, there is
no pretense of optimality in {\eqref{eq:Poincaresharpconn}}.
\end{remark}

\begin{proof}
By density, it is sufficient to prove {\eqref{eq:PoincareDifFio}} for
every $u \in C_c^{\infty} (\Om)$. First, assume that $\pp > 1$
so that $| u |^{\pp} \in C^1_c (\Om)$ and (with $\tmop{sign} t
= 1$ if $t \geqslant 0$ and $\tmop{sign} t = -1$ otherwise) we have
\begin{equation}
\grad | u |^{\pp} = \pp (\tmop{sign} u) | u |^{\pp - 1} \grad u.
\end{equation}
For any unit vector $\sigma \in \Stwo^{\Ndim - 1}$, and any $x_0 \in
\RR^{\Ndim}$, we set $\tau_{x_0} (x) \assign (x - x_0)$. Observe that
$\divv_x ((\tau_{x_0} \cdot \sigma) \sigma) = \divv_x ((x \cdot \sigma)
\sigma) = | \sigma |^2 = 1$, therefore, pointwise in $\Om$ we have that
\begin{equation}
\divv \left[ (\tau_{x_0} \cdot \sigma) \sigma | u |^{\pp} \right]  \eqs 
| u |^{\pp} + \pp (\tmop{sign} u) (\tau_{x_0} \cdot \sigma) | u |^{\pp -
1} \cdot \partial_{\sigma} u.
\end{equation}
After that, integrating both sides of the previous relation on $\Om$, using
the divergence theorem and then H{\"o}lder inequality, we get that
\begin{align}
\int_{\Om} | u |^{\pp} \mathd x & =  - \pp \int_{\Om} (\tmop{sign} u) | u
|^{\pp - 1} (\tau_{x_0} \cdot \sigma) \partial_{\sigma} u \mathd x \\
& \leqslant  \pp \sup_{x \in \Om} | \tau_{x_0} \cdot \sigma | \left(
\int_{\Om} | u |^{\pp} \mathd x \right)^{1 / \qq} \left( \int_{\Om} |
\partial_{\sigma} u |^{\pp} \mathd x \right)^{1 / \pp},
\end{align}
where we denoted by $\qq$ the conjugate exponent of $\pp$, i.e., $1 / \pp +
1 / \qq = 1$. By the arbitrariness of $x_0 \in \RR^{\Ndim}$, we get
{\eqref{eq:PoincareDifFio}} for every $\pp > 1$. Eventually, since
$c_{\Om, \sigma}$ does not depend on $\pp$, if $u \in C_c^{\infty}(
\Om)$, passing to the limit for $\pp \rightarrow 1^+$ in
{\eqref{eq:PoincareDifFio}} we conclude.
\end{proof}

\section{Hardy--Poincar{\'e} inequalities in arbitrary open
sets}\label{sec:HPinequalities}

{\noindent}In this section, we prove a weighted inequality that applies to
general open subsets of $\RR^{\Ndim}$. The argument is short and constructive. Moreover,
it is sharp in the sense that, as a particular case, we get the classical
Hardy inequality in $C_c^{\infty} \left( \RR^{\Ndim} \right)$ with optimal
constant (see Remark~\ref{rmk:Hardysharp}). 
To the best of the authors' knowledge, the inequality we present is new, but in any case, the point we want to emphasize here resides in the short argument used to derive it. 

\begin{theorem}
\label{thm:mainfixexpomgen}Let $\Om$ be an open subset
{\tmem{(}}bounded or not{\tmem{)}} of $\RR^{\Ndim}$, $\Ndim\geqslant 1$, and $\pp \in [1, + \infty)$. For any
$\lambda > 0, \alpha > 0, \beta \neq 0$ in the set of real numbers and such
that $\alpha \beta < \Ndim$, and for arbitrary $x_0 \in \RR^{\Ndim}$, we
have that
\begin{equation}
\int_{\Om} \frac{| u (x) |^{\pp}}{(\lambda + | x - x_0
|^{\alpha})^{\beta}} \mathd x  \leqslant 
\left(\frac{\pp}{\kappa_+}\right)^\pp \int_{\Om} \left| \grad u (x) \right|^{\pp}
\frac{| x - x_0 |^{\pp}}{(\lambda + | x - x_0 |^{\alpha})^{\beta}} \mathd
x \quad \forall u \in C^{\infty}_c (\Om) . \label{eq:gen1}
\end{equation}
Here, the strictly positive constant $\kappa_+$ is given by $\kappa_+
\assign \left( \Ndim - \alpha \beta \right)$ if $\beta > 0$ and $\kappa_+ =
\Ndim$ if $\beta < 0$.
\end{theorem}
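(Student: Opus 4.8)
The plan is to run the divergence-theorem scheme of Theorem~\ref{thm:Poincarebound1d}, but now with a \emph{radial} auxiliary vector field matched to the weight. Since the inequality is asserted for each fixed $x_0$, by translating we may assume $x_0=0$; write $r=|x|$ and $h(r)\assign(\lambda+r^\alpha)^{-\beta}$, so that the left-hand side of \eqref{eq:gen1} is $\int_\Om h(|x|)\,|u|^{\pp}\,\mathd x$. I would first establish the case $\pp>1$, where $|u|^{\pp}\in C^1_c(\Om)$ and $\grad|u|^{\pp}=\pp(\operatorname{sign}u)|u|^{\pp-1}\grad u$, and take the field $V(x)\assign x\,h(|x|)\,|u(x)|^{\pp}$. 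Since $\divv\!\bigl(x\,h(|x|)\bigr)=N h(r)+r h'(r)$, one has, pointwise off the origin,
\[
  \divv V=\bigl(Nh(r)+rh'(r)\bigr)|u|^{\pp}+\pp\,h(|x|)\,(\operatorname{sign}u)\,|u|^{\pp-1}\,(x\cdot\grad u).
\]

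The arithmetic core is the pointwise lower bound
\[
  Nh(r)+rh'(r)=(\lambda+r^\alpha)^{-\beta-1}\bigl[N\lambda+(N-\alpha\beta)r^\alpha\bigr]\ \geq\ \kappa_+\,h(r),
\]
obtained by comparing the $\lambda$- and the $r^\alpha$-coefficient inside the bracket: if $\beta>0$ then $0<N-\alpha\beta<N$ (using $\alpha\beta<N$ together with $\alpha\beta>0$), so the bracket is at least $(N-\alpha\beta)(\lambda+r^\alpha)=\kappa_+(\lambda+r^\alpha)$; if $\beta<0$ then $N-\alpha\beta>N>0$, so the bracket is at least $N(\lambda+r^\alpha)=\kappa_+(\lambda+r^\alpha)$. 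In particular $\divv\!\bigl(x\,h(|x|)\bigr)>0$ on $\Om$. I would then integrate $\divv V$ over $\Om$: the field $V$ is continuous and compactly supported in $\Om$, and is $C^1$ off the origin, so applying the classical divergence theorem on $\Om\setminus\overline{B_\varepsilon(0)}$ and letting $\varepsilon\to0^+$ (the remaining integrand is locally bounded, so the interior integral converges, while the spherical boundary term is $O(\varepsilon^N h(\varepsilon))=O(\varepsilon^N)\to0$) yields $\int_\Om\divv V\,\mathd x=0$, i.e.
\[
  \int_\Om\bigl(Nh(r)+rh'(r)\bigr)|u|^{\pp}\,\mathd x=-\pp\int_\Om(\operatorname{sign}u)\,|u|^{\pp-1}\,h(|x|)\,(x\cdot\grad u)\,\mathd x.
\]

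Bounding the left side below by $\kappa_+\int_\Om h\,|u|^{\pp}$ and the right side above via $|x\cdot\grad u|\leq|x|\,|\grad u|$ and then H\"older with the splitting $h=h^{1/\qq}h^{1/\pp}$ (so that $|u|^{\pp-1}h^{1/\qq}=(|u|^{\pp}h)^{1/\qq}$ and $|x|\,|\grad u|\,h^{1/\pp}=(|x|^{\pp}|\grad u|^{\pp}h)^{1/\pp}$), with $\qq$ the conjugate exponent of $\pp$, gives
\[
  \kappa_+\int_\Om h\,|u|^{\pp}\,\mathd x\ \leq\ \pp\Bigl(\int_\Om h\,|u|^{\pp}\,\mathd x\Bigr)^{1/\qq}\Bigl(\int_\Om|x|^{\pp}|\grad u|^{\pp}h\,\mathd x\Bigr)^{1/\pp}.
\]
Dividing (if $\int_\Om h\,|u|^{\pp}\,\mathd x>0$, the estimate being trivial otherwise) and raising to the $\pp$-th power is exactly \eqref{eq:gen1} for $\pp>1$. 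Finally, since $\kappa_+$ is independent of $\pp$ and, for fixed $u\in C^\infty_c(\Om)$, both sides of \eqref{eq:gen1} depend continuously on $\pp$ by dominated convergence on the compact support of $u$, letting $\pp\to1^+$ disposes of $\pp=1$.

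The only genuine design choice in all this is guessing the vector field $V=(x-x_0)\,h(|x-x_0|)\,|u|^{\pp}$ with $h$ taken to be the weight itself; once it is in place, the argument reduces to the elementary coefficient comparison for $Nh+rh'$ and a routine H\"older estimate. The one technical point that requires care is the integration by parts when $x_0\in\Om$ and $\alpha<2$, since then $x\mapsto(x-x_0)\,h(|x-x_0|)$ is merely continuous (not $C^1$) at $x_0$; this is handled exactly by excising the ball $B_\varepsilon(x_0)$ as above and checking that the extra spherical boundary term vanishes as $\varepsilon\to0$.
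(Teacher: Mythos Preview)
Your proof is correct and follows essentially the same approach as the paper: the same radial vector field $(x-x_0)\,\omega(x)\,|u|^{\pp}$ with $\omega=(\lambda+|x-x_0|^\alpha)^{-\beta}$, the same pointwise lower bound $\divv((x-x_0)\omega)\geqslant\kappa_+\omega$, the same H\"older splitting $\omega=\omega^{1/\qq}\omega^{1/\pp}$, and the same $\pp\to1^+$ limit. Your excision near $x_0$ is harmless but in fact unnecessary, since $\lambda>0$ makes $x\mapsto(x-x_0)\,h(|x-x_0|)$ genuinely $C^1$ at $x_0$ (the off-diagonal part of its Jacobian is $O(r\,h'(r))=O(r^\alpha)\to0$).
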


\begin{remark}
For $\pp = \alpha = 2$, $\lambda = \beta = 1$, $x_0 = 0$, and in the
three-dimensional case $\Ndim = 3$, {\eqref{eq:gen1}} reduces to the inequality
\begin{equation}
\int_{\Om} \frac{| u (x) |^{2}}{1 + | x |^2} \mathd x  \leqslant 
4  \int_{\Om} \left| \grad u (x) \right|^{2}
\frac{| x |^{2}}{1 + | x |^{2}} \mathd
x  \quad \forall u \in C^{\infty}_c (\Om) \, ,
\end{equation}
which is a sharper version of the Hardy--Poincar{\'e} inequality {\eqref{eq:BLineq}} in the Beppo Levi space
$B_{} L^1_0 (\Om)$ which plays an important role in the
analysis of elliptic problems in exterior domains. As already recalled in
the introduction,  the proof of
{\eqref{eq:BLineq}} given in {\cite[Thm.~1, p.~114]{Dautray1990}} is by contradiction and based on a localization and
compactness argument.
\end{remark}

\begin{remark}
Note that $\kappa_+$ is a {\tmem{universal constant}} in the sense that it
depends only on the dimension $\Ndim$ of the ambient space if $\beta < 0$,
as well as on the decay-at-infinity rate $\alpha \beta$ when $\beta > 0$. In
particular, $\kappa_+$ does not depend on $\Om$. Also, we point out that a
closer look at the proof of Theorem~\ref{thm:mainfixexpomgen}
({\tmabbr{cf.}}~{\eqref{eq:tobeused}}) shows that when $\beta < 0$, one can
be more accurate and set
\[ \kappa_+ \assign 
\begin{cases}
\Ndim - \alpha | \beta | & \text{if } \beta > 0,\\
\Ndim + \alpha | \beta | \min_{x \in \Om} \frac{| x - x_0
|^{\alpha}}{\lambda + | x - x_0 |^{\alpha}} & \text{if } \beta < 0 .
\end{cases}  \]
However, this sharper definition of $\kappa_+$, makes the constant no more
universal when $\beta < 0$ and $\Om$ is such that $\min_{x \in \Om} | x -
x_0 | > 0$ for some $x_0 \in \RR^{\Ndim}$ (in particular, if $\Om$ is
bounded).
\end{remark}

\begin{proof}[Proof of Theorem~\ref{thm:mainfixexpomgen}]
In what follows, $x_0 \in \RR^{\Ndim}$ denotes an arbitrary point. With
$\lambda > 0, \alpha > 0, \beta \neq 0$ in the set of real numbers, and $\alpha \beta<\Ndim$, we
define the weight
\begin{equation}
\omega (x) \assign \frac{1}{(\lambda + | \tau_{x_0} (x)
|^{\alpha})^{\beta}}, \quad \tau_{x_0} (x) \assign x - x_0 .
\end{equation}
We note that, pointwise in $\Om$,
\begin{equation}
\divv_x [\omega \tau_{x_0}] (x) \eqs \left( \Ndim - \alpha \beta \frac{|
\tau_{x_0} (x) |^{\alpha}}{\lambda + | \tau_{x_0} (x) |^{\alpha}} \right)
\omega (x) \; \geqslant \; \kappa_+ \omega (x), \label{eq:tobeused}
\end{equation}
with $\kappa_+ = \left( \Ndim - \alpha \beta \right)$ if $\beta > 0$ and
$\kappa_+ = \Ndim$ if $\beta < 0$. Note that, by assumption, $\Ndim > \alpha
\beta$ and, therefore, $\kappa_+ > 0$ regardless of the value of the
parameters $\alpha > 0$ and $\beta \neq 0$. Also, note that for easier
readability, we do not report in the notation the dependences of $\omega$
and $\kappa_+$ on $\lambda, \alpha, \beta$, $\Ndim$, and $x_0$.

First, assume that $\pp > 1$ so that $| u |^{\pp} \in C^1_c (\Om)$ and (with $\tmop{sign} t = 1$ if $t \geqslant 0$ and $\tmop{sign} t
= -1$ otherwise) we have $\grad | u |^{\pp} = \pp (\tmop{sign} u) | u |^{\pp
- 1} \grad u$. Then, we lower bound the divergence of $\omega | u |^{\pp}
\tau_{x_0}$ as follows:
\begin{align*}
\divv_x \left[ \omega | u |^{\pp} \tau_{x_0} \right] & \eqs  | u |^{\pp}
\divv_x [\omega \tau_{x_0}] + \pp (\tmop{sign} u) | u |^{\pp - 1} \omega
\grad u \cdot \tau_{x_0} \nonumber\\
& \overset{\eqref{eq:tobeused}}{\geqslant}  \kappa_+
\omega | u |^{\pp} + \pp (\tmop{sign} u) | u |^{\pp / \qq} \omega \grad u
\cdot \tau_{x_0}, \nonumber
\end{align*}
where we denoted by $\qq$ the conjugate exponent of $\pp$, i.e., $1 / \pp +
1 / \qq = 1$. By the divergence theorem, integrating over $\Om$ the first
and last member of the previous expression, we obtain
\begin{equation}
\kappa_+ \int_{\Om} | u |^{\pp} \omega \mathd x \leqslant \pp \int_{\Om}
\left( | u |^{\pp / \qq} \omega^{1 / \qq} \right) \left(\omega^{1 / \pp}
\left| \grad u \right| \cdot | \tau_{x_0} | \right) \mathd x .
\label{eq:fundsteppoincare}
\end{equation}
By H{\"o}lder inequality, we get
\begin{equation}
\kappa_+ \int_{\Om} | u |^{\pp} \omega \mathd x  \leqslant  \pp \left(
\int_{\Om} | u |^{\pp} \omega^{} \mathd x \right)^{\frac{1}{\qq}} \left(
\int_{\Om} \left| \grad u \right|^{\pp} | \tau_{x_0} |^{\pp} \omega^{}
\mathd x \right)^{\frac{1}{\pp}},
\end{equation}
that is, in more explicit terms,
\begin{equation}
\int_{\Om} \frac{| u (x) |^{\pp}}{(\lambda + | x - x_0
|^{\alpha})^{\beta}} \mathd x^{}  \leqslant  \left( \frac{\pp}{\kappa_+}
\right)^{\pp} \int_{\Om} \left| \grad u (x) \right|^{\pp} \frac{| x - x_0
|^{\pp}}{(\lambda + | x - x_0 |^{\alpha})^{\beta}} \mathd x.
\end{equation}
This concludes the proof under the assumption that $\pp > 1$. The case $\pp
= 1$ can be obtained by noting that since $u \in C^{\infty}_c (\Om)$, Lebesgue dominated convergence theorem permits to pass to the
limit for $\pp \rightarrow 1^+$ in the previous expression.
\end{proof}

\subsection{Corollaries}The general Poincar{\'e}-type inequality in
Theorem~\ref{thm:mainfixexpomgen} unifies many well-known inequalities on
bounded and unbounded domains. The first consequence is a generalized version
of the classical multidimensional Hardy
inequality
{\cite{Hardy1952,Kufner2017,Ruzhansky2019}}, which follows as a particular
case of Theorem~\ref{thm:mainfixexpomgen} when $\beta > 0$.

\begin{corollary}
Let $\Om$ be an open subset
{\tmem{(}}bounded or not{\tmem{)}} of $\RR^{\Ndim}$, $\Ndim\geqslant 1$, and $\pp \in [1, + \infty)$. For any $0 < \gamma < \Ndim$ \ and
for arbitrary $x_0 \in \RR^{\Ndim}$, the following Hardy-type inequality
holds:
\begin{equation}
 \int_{\Om} \frac{| u (x) |^{\pp}}{| x - x_0 |^{\gamma}} \mathd x
  \leqslant \left(\frac{\pp}{\Ndim - \gamma}\right)^\pp
\int_{\Om} \frac{\left| \grad u (x) \right|^{\pp}}{| x - x_0 |^{\gamma -
\pp}} \mathd x  \quad \forall u \in C^{\infty}_c
(\Om) . \label{eq:gen2}
\end{equation}
In particular, for $\gamma \assign \pp < \Ndim$ we get a generalized version
of Hardy inequality in $\RR^{\Ndim}$:
\begin{equation}
 \sup_{x_0 \in \RR^N} \int_{\Om} \frac{| u (x) |^{\pp}}{| x - x_0
|^{\pp}} \mathd x  \leqslant  \left(\frac{\pp}{\Ndim -
\pp}\right)^\pp \int_{\Om} \left| \grad u (x) \right|^{\pp} \mathd x
 \quad \forall u \in C^{\infty}_c(\Om), \label{eq:gen3}
\end{equation}
which reduces to the classical {\tmem{(}}sharp{\tmem{)}} Hardy inequality
when $\Om \assign \RR^{\Ndim}$:
\begin{equation}
 \int_{\RR^{\Ndim}} \frac{| u (x) |^{\pp}}{| x |^{\pp}} \mathd x
  \leqslant  \left(\frac{\pp}{\Ndim - \pp}\right)^\pp 
\int_{\RR^{\Ndim}} \left| \grad u (x) \right|^{\pp} \mathd x
 \quad \forall u \in C^{\infty}_c (\Om) . \label{eq:gen3.1}
\end{equation}
\end{corollary}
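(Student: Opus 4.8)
The plan is to obtain the whole statement as a limiting case of Theorem~\ref{thm:mainfixexpomgen} along the branch $\beta>0$. Fix $0<\gamma<\Ndim$, an arbitrary $x_0\in\RR^{\Ndim}$, and $u\in C_c^{\infty}(\Om)$. In \eqref{eq:gen1} I would take $\alpha\assign\gamma$, $\beta\assign 1$, and $\lambda>0$ arbitrary; then $\alpha\beta=\gamma<\Ndim$, so the hypotheses of the theorem are met and $\kappa_+=\Ndim-\gamma>0$. Theorem~\ref{thm:mainfixexpomgen} therefore yields, for every $\lambda>0$,
\begin{equation*}
\int_{\Om}\frac{|u(x)|^{\pp}}{\lambda+|x-x_0|^{\gamma}}\mathd x \;\leqslant\; \left(\frac{\pp}{\Ndim-\gamma}\right)^{\pp}\int_{\Om}\frac{|\grad u(x)|^{\pp}\,|x-x_0|^{\pp}}{\lambda+|x-x_0|^{\gamma}}\mathd x .
\end{equation*}

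Next I would let $\lambda\to 0^{+}$. Since $\beta=1>0$, for each fixed $x\neq x_0$ the two integrands are nonnegative and increase monotonically as $\lambda$ decreases, with
\begin{equation*}
\frac{1}{\lambda+|x-x_0|^{\gamma}}\;\uparrow\;\frac{1}{|x-x_0|^{\gamma}},\qquad \frac{|x-x_0|^{\pp}}{\lambda+|x-x_0|^{\gamma}}\;\uparrow\;|x-x_0|^{\pp-\gamma}=\frac{1}{|x-x_0|^{\gamma-\pp}}
\end{equation*}
pointwise on $\RR^{\Ndim}\setminus\{x_0\}$, which is a Lebesgue-null set. By the monotone convergence theorem both sides pass to the limit, giving \eqref{eq:gen2}; the two limiting integrals are finite because $u$ is bounded with compact support and the weights $|x-x_0|^{-\gamma}$ and $|x-x_0|^{-(\gamma-\pp)}$ are locally integrable (here one uses $\gamma<\Ndim$ and $\gamma-\pp<\gamma<\Ndim$). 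Equivalently, one could invoke dominated convergence, bounding each $\lambda$-integrand by its pointwise limit.

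Finally I would read off the remaining displays by specialization. Choosing $\gamma\assign\pp$ — admissible precisely when $\pp<\Ndim$ — makes $|x-x_0|^{\gamma-\pp}\equiv 1$, so the right-hand side of \eqref{eq:gen2} becomes $(\pp/(\Ndim-\pp))^{\pp}\int_{\Om}|\grad u(x)|^{\pp}\mathd x$, which is independent of $x_0$; taking the supremum over $x_0\in\RR^{\Ndim}$ on the left then gives \eqref{eq:gen3}. Taking in particular $\Om\assign\RR^{\Ndim}$ (and $x_0=0$) produces \eqref{eq:gen3.1}, the constant $(\pp/(\Ndim-\pp))^{\pp}$ being the classical sharp Hardy constant. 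There is essentially no obstacle here, the statement being a direct corollary; the only step that will need a word of care is the interchange of limit and integral as $\lambda\to 0^{+}$, which is immediate because the choice $\beta>0$ makes the integrands monotone in $\lambda$.
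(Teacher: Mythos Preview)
Your proof is correct and follows essentially the same route as the paper: apply Theorem~\ref{thm:mainfixexpomgen} with $\beta>0$ and pass to the limit $\lambda\to 0^{+}$ via monotone convergence, then specialize. The only cosmetic difference is that the paper leaves the parametrization $\gamma\assign\alpha\beta$ free while you fix $\alpha=\gamma$, $\beta=1$, which is immaterial since only the product $\alpha\beta$ enters both the weight and the constant $\kappa_+$.
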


\begin{remark}
\label{rmk:Hardysharp}We stress that the constant $\pp / \left( \Ndim - \pp
\right)$ in {\eqref{eq:gen3.1}} is known to be sharp (see, e.g.,
{\cite[Chap.~2]{Ruzhansky2019}}). Also, note that while {\eqref{eq:gen2}}
makes sense in any spatial dimension $\Ndim$, the condition $\Ndim > \pp
\geqslant 1$ in {\eqref{eq:gen3}} and {\eqref{eq:gen3.1}} forces space
dimensions $\Ndim \geqslant 2$.
\end{remark}

\begin{proof}
Consider {\eqref{eq:gen1}} with $\beta > 0$. Taking the $\liminf$ for
$\lambda \rightarrow 0^+$ of both sides of {\eqref{eq:gen1}} and invoking
Beppo Levi's monotone convergence theorem, we get {\eqref{eq:gen2}} with
$\gamma \assign \alpha \beta$. Finally, recall that $\kappa_+ = \Ndim -
\alpha | \beta |$ when $\beta > 0$.
\end{proof}

The second consequence we want to establish is a dual version of Hardy
inequality.
It follows as a particular case of Theorem~\ref{thm:mainfixexpomgen} when
$\beta < 0$.

\begin{corollary}
Let $\Om \subseteq \RR^{\Ndim}$ be an open set {\tmem{(}}bounded or
not{\tmem{)}}, $\pp \in [1, + \infty)$. For any $\gamma > 0$ \ and for
arbitrary $x_0 \in \RR^{\Ndim}$, the following Hardy-type inequality holds:
\begin{equation}
 \int_{\Om} | u (x) |^{\pp} | x - x_0 |^{\gamma} \mathd x
  \leqslant  \left(\frac{\pp}{\Ndim}\right)^\pp  \int_{\Om}
\left| \grad u (x) \right|^{\pp} | x - x_0 |^{\pp + \gamma} \mathd x
 \quad \forall u \in C^{\infty}_c (\Om) . \label{eq:gen2betaneg}
\end{equation}
In particular,
\begin{equation}
 \int_{\Om} | u (x) |^{\pp} \mathd x 
\leqslant  \left(\frac{\pp}{\Ndim}\right)^\pp \int_{\Om} \left| \grad u (x)
\right|^{\pp} | x - x_0 |^{\pp} \mathd x  \quad
\forall u \in C^{\infty}_c (\Om) . \label{eq:gen3betaneg}
\end{equation}
\end{corollary}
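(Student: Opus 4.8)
The plan is to obtain both inequalities by specializing Theorem~\ref{thm:mainfixexpomgen} to the regime $\beta < 0$ and then passing to the limit in the parameters, in close analogy with the derivation of the Hardy-type corollary for $\beta > 0$. The first observation is that the structural hypothesis $\alpha\beta < \Ndim$ of Theorem~\ref{thm:mainfixexpomgen} is vacuous here, since $\alpha > 0$ and $\beta < 0$ give $\alpha\beta < 0 < \Ndim$; moreover in this regime the universal constant is $\kappa_+ = \Ndim$, so $(\pp/\kappa_+)^\pp = (\pp/\Ndim)^\pp$. Writing $\beta = -|\beta|$ with $|\beta| > 0$, the weight $(\lambda + |x-x_0|^\alpha)^{-\beta}$ in \eqref{eq:gen1} becomes $(\lambda + |x-x_0|^\alpha)^{|\beta|}$, so for every $u \in C_c^\infty(\Om)$, every $\lambda > 0$, every $\alpha > 0$ and every $|\beta| > 0$, Theorem~\ref{thm:mainfixexpomgen} reads
\[
\int_\Om |u(x)|^\pp (\lambda + |x-x_0|^\alpha)^{|\beta|}\,\mathd x \;\leqslant\; \Big(\frac{\pp}{\Ndim}\Big)^\pp \int_\Om |\grad u(x)|^\pp\, |x-x_0|^\pp (\lambda + |x-x_0|^\alpha)^{|\beta|}\,\mathd x .
\]

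To get \eqref{eq:gen2betaneg} I would let $\lambda \to 0^+$ with $\alpha, |\beta|$ fixed. This is the one spot where the argument departs from the $\beta>0$ case: there the weight increases as $\lambda \downarrow 0$ and Beppo Levi's theorem applies, whereas here, since $|\beta|>0$, the weight $(\lambda + |x-x_0|^\alpha)^{|\beta|}$ \emph{decreases} as $\lambda \downarrow 0$, so monotone convergence from below is not available. One instead exploits that any competitor $u$ is supported in a fixed compact set $K \subset \Om$: for $\lambda \in (0,1]$ the integrand on the left is dominated by the integrable function which equals $\|u\|_\infty^\pp\,\sup_{x\in K}(1+|x-x_0|^\alpha)^{|\beta|}$ on $K$ and vanishes elsewhere, and similarly on the right-hand side, so Lebesgue's dominated convergence theorem yields
\[
\int_\Om |u(x)|^\pp\, |x-x_0|^{\alpha|\beta|}\,\mathd x \;\leqslant\; \Big(\frac{\pp}{\Ndim}\Big)^\pp \int_\Om |\grad u(x)|^\pp\, |x-x_0|^{\pp + \alpha|\beta|}\,\mathd x .
\]
Since $\alpha > 0$ and $|\beta| > 0$ are free, the exponent $\gamma := \alpha|\beta|$ is an arbitrary positive number, and this is exactly \eqref{eq:gen2betaneg}.

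Finally, for \eqref{eq:gen3betaneg}, I would either send $\gamma \to 0^+$ in \eqref{eq:gen2betaneg} --- using again that on $K$ one has $|x-x_0|^\gamma \to 1$ and $|x-x_0|^{\pp+\gamma}\to |x-x_0|^\pp$ with a uniform integrable majorant for $\gamma \in (0,1]$, so dominated convergence applies --- or, equivalently, read \eqref{eq:gen3betaneg} directly off the displayed form of Theorem~\ref{thm:mainfixexpomgen} above by keeping $\lambda = 1$, $\alpha = 1$ and letting $\beta \to 0^-$, since then $(\lambda + |x-x_0|^\alpha)^{|\beta|} \to 1$ pointwise with the majorant $1+|x-x_0|$ on $K$. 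Either way one lands on $\int_\Om |u|^\pp\,\mathd x \leqslant (\pp/\Ndim)^\pp \int_\Om |\grad u|^\pp |x-x_0|^\pp\,\mathd x$.

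I do not anticipate a genuine difficulty: the proof is essentially a change of sign of $\beta$ in Theorem~\ref{thm:mainfixexpomgen} together with elementary limits, and the positive powers of $|x-x_0|$ occurring here cause no integrability issue near $x_0$ (in contrast with the Hardy corollary, where $\gamma < \Ndim$ is genuinely needed). The only point deserving care --- and the only thing that keeps this corollary's proof from being a verbatim copy of the $\beta>0$ one --- is that for $\beta < 0$ the relevant weight is monotone in the ``wrong'' direction as $\lambda \downarrow 0$, so one must replace the monotone convergence step by a dominated convergence step, which is legitimate precisely because the admissible functions are compactly supported.
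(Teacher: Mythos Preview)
Your proposal is correct and follows essentially the same route as the paper: specialize Theorem~\ref{thm:mainfixexpomgen} to $\beta<0$ (so $\kappa_+=\Ndim$), let $\lambda\to 0^+$ to obtain \eqref{eq:gen2betaneg} with $\gamma=\alpha|\beta|$, and then let $\gamma\to 0^+$ via dominated convergence to get \eqref{eq:gen3betaneg}. You are in fact more explicit than the paper about why dominated (rather than monotone) convergence is the right tool in the $\lambda\to 0^+$ step, which is a welcome clarification.
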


\begin{proof}
Consider {\eqref{eq:gen1}} with $\beta < 0$. We then have, for every $u \in
C^{\infty}_c (\Om)$,
\[ \int_{\Om} | u (x) |^{\pp} (\lambda + | x - x_0 |^{\alpha})^{| \beta |}
\mathd x \, \leqslant \, \left( \frac{\pp}{\Ndim} \right)^{\pp}
\int_{\Om} \left| \grad u (x) \right|^{\pp} | x - x_0 |^{\pp} (\lambda +
| x - x_0 |^{\alpha})^{| \beta |} \mathd x, \]
because $\kappa_+ \assign \Ndim$ when $\beta < 0$. With $\gamma \assign
\alpha | \beta |$, taking the limit for $\lambda \rightarrow 0^+$ of both
sides of the previous inequality we get
\begin{equation}
\int_{\Om} | u (x) |^{\pp} | x - x_0 |^{\gamma} \mathd x  \leqslant 
\left( \frac{\pp}{\Ndim} \right)^{\pp} \int_{\Om} \left| \grad u (x)
\right|^{\pp} | x - x_0 |^{\pp + \gamma} \mathd x.
\end{equation}
Since $u \in C^{\infty}_c (\Om)$, by Lebesgue's dominated
convergence theorem we can pass to the limit for $\gamma \rightarrow 0^+$ to
obtain {\eqref{eq:gen3betaneg}}.
\end{proof}

For completeness, we observe that another consequence of
Theorem~\ref{thm:mainfixexpomgen} is the classical Poincar{\'e} inequality in
bounded domains of \ $\RR^{\Ndim}$ which, as already pointed out in
Remark~\ref{rmk:fromsigmatograd}, is also a consequence of
Theorem~\ref{thm:Poincarebound1d}.

\begin{corollary}
Let $\Om \subseteq \RR^{\Ndim}$ be a {\tmem{bounded}} open set, $\pp \in [1,
+ \infty)$. The following Poincar{\'e}-type inequality
\begin{equation}
\left( \int_{\Om} | u (x) |^{\pp} \mathd x \right)^{\frac{1}{\pp}} 
\leqslant  \frac{\pp}{\Ndim} c_{\Om} \left( \int_{\Om} \left| \grad u
(x) \right|^{\pp} \mathd x \right)^{\frac{1}{\pp}} \quad \forall u \in
W_0^{1, \pp} (\Om) \label{eq:gen4}
\end{equation}
with
\begin{equation}
c_{\Om} \assign \inf_{x_0 \in \RR^N} \sup_{x \in \Om} | x - x_0 |
\leqslant \tmop{diam} \Om .
\end{equation}
\end{corollary}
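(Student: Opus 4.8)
The plan is to obtain \eqref{eq:gen4} as a one-line specialization of the $\beta<0$ case of Theorem~\ref{thm:mainfixexpomgen}, i.e.\ of the inequality \eqref{eq:gen3betaneg} of the preceding corollary. Fix an arbitrary $x_0\in\RR^{\Ndim}$ and note that, since $\Om$ is bounded, the quantity $M(x_0)\assign\sup_{x\in\Om}|x-x_0|$ is finite. From \eqref{eq:gen3betaneg}, bounding $|x-x_0|^{\pp}\leqslant M(x_0)^{\pp}$ under the integral sign on the right-hand side, we get
\[
\int_{\Om}|u(x)|^{\pp}\,\mathd x\;\leqslant\;\left(\frac{\pp}{\Ndim}\right)^{\pp}M(x_0)^{\pp}\int_{\Om}\left|\grad u(x)\right|^{\pp}\,\mathd x\qquad\forall u\in C_c^{\infty}(\Om).
\]
Taking $\pp$-th roots and then the infimum over $x_0\in\RR^{\Ndim}$ yields \eqref{eq:gen4} for every $u\in C_c^{\infty}(\Om)$, with $c_{\Om}=\inf_{x_0}M(x_0)$ exactly as in the statement.

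Next I would extend the inequality from $C_c^{\infty}(\Om)$ to all of $W_0^{1,\pp}(\Om)$ by density: by definition $W_0^{1,\pp}(\Om)$ is the closure of $C_c^{\infty}(\Om)$ in the $W^{1,\pp}$-norm, and both $u\mapsto\|u\|_{L^{\pp}(\Om)}$ and $u\mapsto\|\grad u\|_{L^{\pp}(\Om)}$ are continuous for that norm, so \eqref{eq:gen4} passes to the limit. Finally, for the estimate $c_{\Om}\leqslant\tmop{diam}\Om$ it suffices to choose any point $x_0\in\Om$ (the case $\Om=\varnothing$ being vacuous): then $|x-x_0|\leqslant\tmop{diam}\Om$ for every $x\in\Om$, whence $M(x_0)\leqslant\tmop{diam}\Om$ and therefore $c_{\Om}=\inf_{x_0}M(x_0)\leqslant\tmop{diam}\Om$.

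I do not expect any genuine obstacle here: the corollary is a direct consequence of Theorem~\ref{thm:mainfixexpomgen}. The only points deserving a word of care are that the density step is what allows one to replace $C_c^{\infty}(\Om)$ by $W_0^{1,\pp}(\Om)$, and that the finiteness of $c_{\Om}$ is precisely where the boundedness of $\Om$ enters: for an unbounded $\Om$ one has $M(x_0)=+\infty$ for every $x_0$, hence $c_{\Om}=+\infty$ and \eqref{eq:gen4} carries no information, in agreement with the well-known failure of the Poincar\'e inequality on unbounded domains such as $\RR^{\Ndim}$. (Alternatively, one could derive \eqref{eq:gen4} from Theorem~\ref{thm:Poincarebound1d} as indicated in Remark~\ref{rmk:fromsigmatograd}, but the route through Theorem~\ref{thm:mainfixexpomgen} is the most economical.)
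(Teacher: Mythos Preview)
Your proof is correct, but it follows a different route than the paper's. The paper starts from the $\beta>0$ inequality \eqref{eq:gen2}, bounds both sides using $\sup_{x\in\Om}|x-x_0|$ (the left-hand side from below, the right-hand side from above, assuming $0<\gamma<\min\{\pp,\Ndim\}$), and only then takes the limit $\gamma\to 0^+$ to turn the constant $\pp/(\Ndim-\gamma)$ into $\pp/\Ndim$. You instead go through the $\beta<0$ inequality \eqref{eq:gen3betaneg}, which already carries the constant $\pp/\Ndim$ and only requires bounding one factor on the right-hand side, with no further limit needed. Your route is shorter and avoids the extra $\gamma\to 0^+$ step; the paper's route has the minor advantage of using only the ``positive'' Hardy-type corollary, so that one sees the classical Poincar\'e inequality emerge from the same branch that yields Hardy's inequality. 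Both are legitimate specializations of Theorem~\ref{thm:mainfixexpomgen}, and your density and $c_{\Om}\leqslant\tmop{diam}\Om$ arguments are fine.
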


\begin{proof}
By density, it is sufficient to assume that $u \in C^{\infty}_c (\Om)$. Since now $\Om$ is assumed bounded, from {\eqref{eq:gen2}}, we get
that when $0 < \gamma < \Ndim$, for every $x_0 \in \RR^{\Ndim}$ and every $u \in C_c^{\infty}(\Om)$ there holds
\begin{equation}
\frac{1}{\sup_{x \in \Om} | x - x_0 |^{\gamma}} \int_{\Om} | u (x) |^{\pp}
\mathd x \leqslant  \left( \frac{\pp}{\Ndim - \gamma} \right)^{\pp}
\sup_{x \in \Om} | x - x_0 |^{\pp - \gamma} \int_{\Om} \left| \grad u (x)
\right|^{\pp} \mathd x.
\end{equation}
Therefore,for $0 < \gamma < \min\{\pp,\Ndim\}$, we get
\begin{equation}
\int_{\Om} | u (x) |^{\pp} \mathd x \leqslant \left(
\frac{\pp}{\Ndim - \gamma} \right)^{\pp} \left( \sup_{x \in \Om} | x - x_0
| \right)^{\pp} \int_{\Om} \left| \grad u (x) \right|^{\pp} \mathd x.
\end{equation}
Eventually, taking the limit for $\gamma \rightarrow 0^+$ we conclude.
\end{proof}

\section{Hardy--Poincar{\'e} inequality in variable exponent Sobolev
spaces}\label{sec:HPinequVarSobsp}
{\noindent} In the variable exponents' framework, the classical Poincar{\'e} inequality \eqref{eq:PoincareIntro} can be seen either as a special case of the norm inequality
\begin{equation}
\| u \|_{L^{\pp (\cdot)} (\Om )}  \leqslant c \left\|
\grad u \right\|_{L^{\pp (\cdot)}} \quad \forall u \in C^{\infty}_c (\Om
), \label{eq:Poincvarnorm}
\end{equation}
or as a particular case of the modular inequality
\begin{equation}
 \int_{\Om} | u (x) |^{\pp (\cdot)} \mathd x  \,
\leqslant  \, c  \int_{\Om} \left| \grad u (x) 
\right|^{\pp (\cdot)} \mathd x \quad \forall u \in C^{\infty}_c (\Om
). \label{eq:Poincvarmodu}
\end{equation}
A straightforward consequence of the definition of the norm of variable exponent Lebesgue spaces (see, e.g., \cite[Def.~2.16, p.~20]{CruzUribe2014}) is that modular inequalities imply the corresponding norm inequalities; therefore, in general, the validity of a modular inequality requires the same or stronger assumptions on the exponent function.

The norm form \eqref{eq:Poincvarnorm} of the Poincar{\'e} inequality is known to hold in bounded domains provided that uniform continuity or small local oscillation on the exponent is assumed (see, e.g., \cite[Theorem~3.10]{Kova1991}, \cite[Theorem~2.11, p.~69]{Edmunds2014}, \cite[Theorem~8.2.18, p.~263]{Diening2011}, \cite[Theorem~6.2.8, p.~130]{Hasto2019}, \cite[Proposition~2.4]{Mercaldo2013}, \cite[Section~4]{Ciarlet2011}, \cite[Theorem~1.1]{Youssfi2019}). The norm form \eqref{eq:Poincvarnorm} also holds under certain regularity assumptions expressed in terms of the boundedness of the maximal operator (\cite[Theorem~6.21, p.~249]{CruzUribe2014}, \cite[Theorem~8.2.4, p.~255]{Diening2011}). Finally, we mention \cite[Example~3.5]{FioRemovability2020}, where the classical way to get the Poincar{\'e} inequality as a consequence of the Sobolev inequality is stated in terms of modulars, and \cite[Lemma~13.7]{Zhikov2011} where the same inequality is obtained under a compactness assumption.

The modular form \eqref{eq:Poincvarmodu} of the Poincar{\'e} is a broader issue. The interest in it, supported by a general grown attention to modular inequalities (see, e.g., \cite{Fiomodulars2021,CruzUribe2018}), is mainly because of various examples of its invalidity, which prompted for sufficient conditions for its validity (see, e.g., the counterexamples in \cite[Example,~p.~444]{Fan2001}, \cite[Example~8.2.7, p.~257]{Diening2011}, \cite[Section~2]{Maeda2008}; see also \cite{FioCategories2019} for an overview about typical problems in the variable exponents framework). Positive results are, e.g., \cite[Theorem~4.1]{Maeda2008}, \cite[Theorem~3.3]{Fan2005}, \cite[Theorem 1]{Allegretto2007}, (see also \cite[Proposition~8.2.8(a), p.~257]{Diening2011}, \cite[Theorem~6.2.10, p.~131]{Hasto2019}). Note that all the cited results assume the boundedness of the domain.

In this Section, we derive modular Hardy--Poincar{\'e} inequalities
suited for variable exponent Sobolev spaces, which complement the remarkable results obtained in {\cite{Fan2005}}. Indeed, as a consequence of \cite[Thm.~3.1]{Fan2005}, one obtains that there is no hope for a Poincaré inequality in a bounded domain $\Om$ of $\RR^N$ if the variable exponent $\pp (\cdot)$ is radial with respect to a point $x_0 \in \RR^N$,
i.e., $\pp (x) = \pp (| x - x_0 |)$, and the profile of $\pp$ is
decreasing. Instead, quite surprisingly, statement \tmem{ii}\@.~of Theorem~\ref{thm:VASBPI} below shows that this is the case provided that one restricts the class of competitors to the space of functions $u \in C_c^{\infty} (\Om)$ such
that $| u | \leqslant 1$ in $\Om$ (see next Remark~\ref{rmk:Fan}). Moreover, our result holds even if $\Om$ is unbounded, and returns explicit dependence of the modular Poincaré constant in terms of the geometry of the domain and $\pp(\cdot)$.

\begin{theorem} \label{thm:VASBPI}
Let $\Om \subseteq \RR^{\Ndim}$ be an open set {\tmem{(}}bounded or
not{\tmem{)}}, and suppose that the variable exponent
\[ \pp (\cdot) : \Om \rightarrow [1, + \infty) \]
is in $L^\infty (\Om)$. The following assertions hold:
\begin{enumerate}[label=\roman*.]
\item If $\pp (\cdot)$ is constant in the $\sigma$ direction, for some
$\sigma \in \Stwo^{N - 1}$, then for every $u \in C_c^{\infty} (\Om)$ there holds
\begin{equation}
\int_{\Om} | u (x) |^{\pp (x)} \mathd x \leqslant \kappa (\pp) \inf_{x_0 \in \RR^N} \int_{\Om} | \partial_{\sigma} u (x) |^{\pp
(x)} | (x - x_0) \cdot \sigma |^{\pp (x)} \mathd x. \label{eq:thvarLS1}
\end{equation}
for some positive constant which depends on $\pp (\cdot)$, given by
\begin{equation}
\kappa (\pp) \assign \sup_{x \in \Om} \left[ 2^{\pp (x)}
(\pp (x) - 1 )^{\pp (x) - 1} \right] .
\label{eq:exprconstplog}
\end{equation}
In particular, if $\Om$ is also bounded in the $\sigma$-direction, then
for every $u \in C_c^{\infty} (\Om)$ there holds
\begin{equation}
\int_{\Om} | u (x) |^{\pp (x)} \mathd x \leqslant \kappa_{\Om, \sigma}
(\pp) \int_{\Om} | \partial_{\sigma} u (x) |^{\pp (x)}
\mathd x
\end{equation}
for some positive constant $\kappa_{\Om, \sigma} (\pp )$ which
depends on the projection of $\Om$ on $\sigma$, and $\pp (\cdot)$:
\begin{equation}
\kappa_{\Om, \sigma} (\pp) \assign \kappa (\pp) \inf_{x_0 \in \RR^N} \sup_{x \in \Om} | (x - x_0) \cdot \sigma
|^{\pp (x)} . \label{eq:exprconstplog2}
\end{equation}
\item If $\pp (\cdot)$ is radial with respect to a point $x_0 \in \RR^N$,
i.e., $\pp (x) = \pp (| x - x_0 |)$, and the profile of $\pp$ is
decreasing, then, for every $u \in C_c^{\infty} (\Om)$ such
that $| u | \leqslant 1$ in $\Om$ there holds
\begin{equation} \label{eq:thmPoinclog0}
\int_{\Om} | u (x) |^{\pp (x)} \mathd x \leqslant \kappa (\pp) \inf_{(x_0, \sigma) \in \RR^N \times \Stwo^{N - 1}} \int_{\Om} |
\partial_{\sigma} u (x) |^{\pp (x)} | (x - x_0) \cdot \sigma |^{\pp (x)}
\mathd x
\end{equation}
with $\kappa (\pp)$ given by
{\tmem{{\eqref{eq:exprconstplog}}}}. In particular, if $\Om$ is bounded in
the $\sigma$ direction, then for every $u \in C_c^{\infty} (\Om)$ such that $| u | \leqslant 1$ in $\Om$ there holds
\begin{equation}
\int_{\Om} | u |^{\pp (x)} \mathd x \leqslant \kappa_{\Om, \sigma}
( \pp ) \int_{\Om} | \partial_{\sigma} u |^{\pp (x)} \mathd x
\label{eq:thmPoinclog}
\end{equation}
with $\kappa_{\Om, \sigma} (\pp)$ given by
{\tmem{{\eqref{eq:exprconstplog2}}}}.
\end{enumerate}
\end{theorem}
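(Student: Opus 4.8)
The plan is to transport to the variable-exponent modular setting the divergence-theorem device used in the proofs of Theorems~\ref{thm:Poincarebound1d} and~\ref{thm:mainfixexpomgen}; the genuinely new ingredients are a pointwise Young inequality with the $x$-dependent exponent $\pp(x)$ and, for part~\emph{ii}, a sign analysis of the extra term that appears when one differentiates $|u|^{\pp(x)}$. By density it suffices to take $u\in C_c^\infty(\Om)$, extended by $0$ outside $\Om$. Since the auxiliary vector fields below point in one fixed direction $\sigma$, the ``divergence theorem'' is in fact the one-dimensional fundamental theorem of calculus applied line by line along $\sigma$, which is what makes the argument insensitive to the regularity of $\partial\Om$ and to the boundedness of $\Om$.

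\emph{Part~i.} Fix a direction $\sigma$ along which $\pp(\cdot)$ is constant and a point $x_0\in\RR^N$; set $\tau_{x_0}(x)=x-x_0$ and take $V:=(\tau_{x_0}\cdot\sigma)\,|u|^{\pp(x)}\,\sigma$. On each line $s\mapsto y+s\sigma$ the exponent is constant, so $s\mapsto(\tau_{x_0}(y+s\sigma)\cdot\sigma)\,|u(y+s\sigma)|^{\pp(y)}$ is absolutely continuous with compact support, hence $\int_\Om\divv V\mathd x=0$. Combining this with $\divv_x[(\tau_{x_0}\cdot\sigma)\sigma]=1$ and $\partial_\sigma|u|^{\pp}=\pp\,(\operatorname{sign} u)\,|u|^{\pp-1}\partial_\sigma u$ (valid along each line, where $\pp$ is constant) yields $\int_\Om|u|^{\pp(x)}\mathd x\le\int_\Om\pp(x)\,|\tau_{x_0}\cdot\sigma|\,|u|^{\pp(x)-1}\,|\partial_\sigma u|\mathd x$. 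I would then apply, pointwise in $x$, Young's inequality with the conjugate exponents $\pp(x)/(\pp(x)-1)$ and $\pp(x)$ to the product $|u|^{\pp(x)-1}\cdot\big(|\tau_{x_0}\cdot\sigma|\,|\partial_\sigma u|\big)$, choosing the free parameter so that the coefficient of $|u|^{\pp(x)}$ equals $\tfrac12$; a short computation gives
\begin{equation*}
\pp(x)\,|u|^{\pp(x)-1}\,|\tau_{x_0}\cdot\sigma|\,|\partial_\sigma u| \;\le\; \tfrac12\,|u|^{\pp(x)} \;+\; \tfrac12\,2^{\pp(x)}(\pp(x)-1)^{\pp(x)-1}\,|\tau_{x_0}\cdot\sigma|^{\pp(x)}\,|\partial_\sigma u|^{\pp(x)},
\end{equation*}
which remains true, trivially, on $\{\pp=1\}$ under the convention $0^0=1$. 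Since $u\in C_c^\infty(\Om)$ and $\pp\in L^\infty(\Om)$, all integrands are bounded with compact support, so one may integrate, absorb $\tfrac12\int_\Om|u|^{\pp(x)}\mathd x$, bound $2^{\pp(x)}(\pp(x)-1)^{\pp(x)-1}\le\kappa(\pp)$, and pass to the infimum over $x_0$: this is \eqref{eq:thvarLS1}. The ``in particular'' statement follows by factoring out $\sup_{x\in\Om}|\tau_{x_0}\cdot\sigma|^{\pp(x)}$ — finite because $\Om$ is bounded in the $\sigma$-direction and $\pp\in L^\infty$ — and taking the infimum over $x_0$, which gives exactly \eqref{eq:exprconstplog2}.

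\emph{Part~ii.} Now $\pp(x)=P(|x-x_0|)$ with $P$ decreasing, so $\pp$ is constant in no direction and $\partial_\sigma|u|^{\pp(x)}=|u|^{\pp(x)}\ln|u|\,\partial_\sigma\pp(x)+\pp(x)\,(\operatorname{sign} u)\,|u|^{\pp(x)-1}\partial_\sigma u$ carries an extra logarithmic term. The crux is to use the \emph{same} point $x_0$ — the centre of radial symmetry — in the field $V:=(\tau_{x_0}\cdot\sigma)\,|u|^{\pp(x)}\,\sigma$: since $\partial_\sigma\pp(x)=P'(|x-x_0|)\,\frac{(x-x_0)\cdot\sigma}{|x-x_0|}$, the extra contribution to $\divv V$ is
\begin{equation*}
(\tau_{x_0}\cdot\sigma)\,|u|^{\pp(x)}\ln|u|\,\partial_\sigma\pp(x) \;=\; \frac{\big((x-x_0)\cdot\sigma\big)^2}{|x-x_0|}\;|u|^{\pp(x)}\;\ln|u|\;P'(|x-x_0|)\;\ge\;0,
\end{equation*}
a product of a square, of $|u|^{\pp(x)}\ge0$, of $\ln|u|\le0$ (where $|u|\le1$ enters), and of $P'\le0$ (where monotonicity enters). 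Hence, provided $P$ is regular enough — say locally absolutely continuous, the general case following by approximation — for the line-by-line fundamental theorem of calculus to still give $\int_\Om\divv V\mathd x=0$, one drops this nonnegative term and obtains $\int_\Om|u|^{\pp(x)}\mathd x\le\int_\Om\pp(x)\,|\tau_{x_0}\cdot\sigma|\,|u|^{\pp(x)-1}\,|\partial_\sigma u|\mathd x$, after which the pointwise Young inequality of part~\emph{i} applies verbatim. Since $\sigma\in\Stwo^{N-1}$ is arbitrary, passing to the infimum over $\sigma$ (with $x_0$ the radial centre) yields \eqref{eq:thmPoinclog0}, and the ``in particular'' statement follows exactly as before once $\Om$ is bounded along the chosen direction.

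The step I expect to carry the argument is the sign analysis in part~\emph{ii}: it is there that the three hypotheses of that part — $|u|\le1$, radiality of $\pp(\cdot)$, monotonicity of the profile — conspire, and where centring the auxiliary field precisely at the radial centre $x_0$ is what makes $(x-x_0)\cdot\sigma$ enter squared rather than with an indeterminate sign. The other points needing care are the bookkeeping in the $x$-dependent Young inequality — checking that the optimal parameter produces the stated constant $\kappa(\pp)$ and that the inequality degenerates harmlessly on $\{\pp=1\}$ — and the mild regularity of $P$ needed to license the chain rule in part~\emph{ii}; everything else is the density-plus-divergence routine already used twice in the paper.
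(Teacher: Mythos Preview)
Your proposal is correct and follows the same strategy as the paper: compute the divergence of $((x-x_0)\cdot\sigma)\,|u|^{\pp(x)}\,\sigma$, integrate, apply a pointwise Young inequality with parameter chosen so that the coefficient of $|u|^{\pp(x)}$ equals $\tfrac12$ (yielding exactly the factor $2^{\pp(x)}(\pp(x)-1)^{\pp(x)-1}$), and, for part~\emph{ii}, exploit that the logarithmic extra term $(\partial_\sigma\pp)\,((x-x_0)\cdot\sigma)\,|u|^{\pp(x)}\ln|u|$ has a favourable sign precisely when $\pp$ is radial decreasing about $x_0$ and $|u|\le 1$.

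The one place you deviate slightly from the paper is in how regularity of $\pp(\cdot)$ is handled. The paper first assumes $\pp\in C^1_b(\Om)$, carries out the divergence computation globally, and then runs a mollification argument to pass to $\pp\in L^\infty(\Om)$ (separately for parts~\emph{i} and~\emph{ii}, checking that the mollified exponent remains constant along $\sigma$, respectively radial and decreasing). Your line-by-line use of the one-dimensional fundamental theorem of calculus is a mild streamlining for part~\emph{i}: since $\pp$ is constant on each $\sigma$-line, no differentiation of $\pp$ is ever needed and the $L^\infty$ case is obtained directly, without mollification. For part~\emph{ii} you correctly flag that some regularity of the radial profile is needed for the chain rule, with the general decreasing $L^\infty$ case recovered by approximation; this is exactly what the paper does (mollifying the one-dimensional profile and checking that monotonicity is preserved).
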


\begin{remark}
Clearly, in the hypotheses of {\tmem{ii.}}, if $\Om$ is bounded (in any
direction) then given any (not necessarily orthogonal) unit basis
$(\sigma_i)_{i \in \NN_{\Ndim}}$ of $\RR^{\Ndim}$, we get, by
{\eqref{eq:thmPoinclog}}, the following Poincar{\'e} inequality
\begin{equation}
\int_{\Om} | u (x) |^{\pp (x)} \mathd x \leqslant \kappa_{\Om} (\pp) \int_{\Om} \left| \grad u (x) \right|^{\pp (x)} \mathd x
\label{eq:thmPoincloggrad}
\end{equation}
where now $\left| \grad u \right|^{\pp (x)} \assign \sum_{i = 1}^{\Ndim} |
\partial_{\sigma_i} u |^{\pp (x)}$ and $\kappa_{\Om} (\pp) > 0$
is a positive constant linked to the constants $\kappa_{\Om, \sigma_i}$,
e.g., in the way already discussed in Remark~\ref{rmk:fromsigmatograd}.
\end{remark}

\begin{proof} We first show the results under the additional regularity assumption that $\pp(\cdot) \in C^1_b (\Om)$, where $C^1_b (\Om)$ is the space of
continuously differentiable functions on $\Om$ whose partial derivatives are,
again, continuous and bounded in $\Om$. Later we use a density argument to extend the results to the general case of $\pp(\cdot)\in L^\infty (\Om)$.

Let $u \in C^{\infty}_c (\Om)$ and $\pp \in C^1_b (\Om)$. First, assume that $\pp (x) > 1$ for every $x \in \Om$. This
assures that $| u |^{\pp (\cdot)}$ is in $C^1_c (\Om)$. For any
$\sigma \in \Stwo^{N - 1}$ we have $\divv_x ((x \cdot \sigma) \sigma) = |
\sigma |^2 = 1$ and, therefore,
\begin{align}
\divv_{x \,} \left[ ((x - x_0) \cdot \sigma) \sigma | u (x) |^{\pp (x)}
\right] & \eqs  | u (x) |^{\pp (x)} + \grad \left[ | u (x) |^{\pp (x)}
\right] \cdot ((x - x_0) \cdot \sigma) \sigma \nonumber\\
& \eqs  \left[ 1 + \left( \partial_{\sigma} \pp (x) (x - x_0) \cdot
\sigma \right) \log | u (x) | \right] | u (x) |^{\pp (x)} \nonumber\\
&  \qquad + \pp (x) | u (x) |^{\pp (x) - 1} (\tmop{sign} u (x)) ((x -
x_0) \cdot \sigma) \partial_{\sigma} u (x) \nonumber\\
& \eqs  \left[ 1 + \left( \partial_{\sigma} \pp (x) (x - x_0) \cdot
\sigma \right) \log | u (x) | \right] | u (x) |^{\pp (x)} \nonumber\\
&  \qquad + \pp (x) | u (x) |^{\frac{\pp (x)}{\qq (x)}} (\tmop{sign} u
(x)) ((x - x_0) \cdot \sigma) \partial_{\sigma} u (x),
\label{eq:tobeintegrated}
\end{align}
where $\qq (x)$ stands for the conjugate exponent of $\pp (x)$, i.e., $1 /
\pp (x) + 1 / \qq (x) = 1$. Integrating both sides of
{\eqref{eq:tobeintegrated}} and invoking divergence theorem, we get
\begin{align}
\int_{\Om} \Big[ 1 + \left( \partial_{\sigma} \pp (x) (x - x_0) \cdot
\sigma \right) \log | u (x) | \Big] | u (x) |^{\pp (x)} \mathd x \qquad
\qquad \qquad \qquad & & \nonumber\\
\leqslant \int_{\Om} \pp (x) | u (x) |^{\frac{\qq (x)}{\pp (x)}} |
\partial_{\sigma} u (x) | | (x - x_0) \cdot \sigma | \mathd x. & &
\label{eq:temp1j}
\end{align}
By Young's inequality for products we can estimate the integrand on the
right-hand side of {\eqref{eq:temp1j}} as follows (with $\delta : \Om
\rightarrow \RR_+$ a positive measurable function to be determined
afterward)
\begin{align}
| u (x) |^{\frac{\pp (x)}{\qq (x)}} \left| \partial_{\sigma} u (x) \right| | (x -
x_0) \cdot \sigma | & \eqs  | u (x) |^{\frac{\pp (x)}{\qq (x)}} | \delta
(x) |^{\frac{1}{\qq (x)}} \left| \partial_{\sigma} u (x) \right| | \delta (x) |^{-
\frac{1}{\qq (x)}} | (x - x_0) \cdot \sigma | \nonumber\\
& \leqslant \delta (x) \frac{| u (x) |^{\pp (x)}}{\qq (x)} +
\nonumber\\
&  \qquad + \frac{| \partial_{\sigma} u (x) |^{\pp (x)}}{\pp (x)}
\left( \frac{1}{\delta (x)} \right)^{\pp (x) - 1} | (x - x_0) \cdot \sigma
|^{\pp (x)} . \nonumber
\end{align}
Therefore,
\begin{equation}
\begin{array}{c}
\pp (x) \left( | u (x) |^{\frac{\pp (x)}{\qq (x)}} \left|\partial_{\sigma} u (x)
\right| | (x - x_0) \cdot \sigma | \right) \leqslant \; \delta (x)
(\pp (x) - 1 ) | u (x) |^{\pp (x)} + \hspace{4em} \qquad
\qquad\\
\qquad \qquad \qquad \qquad \qquad \qquad \qquad \qquad + |
\partial_{\sigma} u (x) |^{\pp (x)} \left( \frac{1}{\delta (x)}
\right)^{\pp (x) - 1} | (x - x_0) \cdot \sigma |^{\pp (x)} .
\end{array} \label{eq:temp2j}
\end{equation}
From {\eqref{eq:temp1j}} and {\eqref{eq:temp2j}} it follows that
\begin{align}
\int_{\Om} \Big[ 1 - \delta (x) (\pp (x) - 1 ) + \left(
\partial_{\sigma} \pp (x) (x - x_0) \cdot \sigma \right) \log | u (x)
| \Big] \, | u (x) |^{\pp (x)} \mathd x \qquad \qquad \notag\\
\qquad \qquad \qquad \qquad \qquad \hspace{3em} \leqslant \int_{\Om} |
\partial_{\sigma} u (x) |^{\pp (x)} \left( \frac{1}{\delta (x)}
\right)^{\pp (x) - 1} | (x - x_0) \cdot \sigma |^{\pp (x)} \mathd x. \label{eq:gencase}
\end{align}
{\noindent}{\tmem{Proof of i.}} Assume the existence of $\sigma \in
\Stwo^{N - 1}$ such that $\pp (\cdot)$ is constant along $\sigma$ and that
$\pp (x) > 1$ for every $x \in \Om$. From the previous relation
{\eqref{eq:gencase}} we infer that
\begin{equation}
\int_{\Om} \left[ 1 - \delta (x) (\pp (x) - 1) \right] | u
(x) |^{\pp (x)} \mathd x  \leqslant  \int_{\Om} | \partial_{\sigma} u
(x) |^{\pp (x)} \left( \frac{1}{\delta (x)} \right)^{\pp (x) - 1} | (x -
x_0) \cdot \sigma |^{\pp (x)} \mathd x. \label{eq:gencasetemp1}
\end{equation}
Therefore, if we define $\delta (x) \assign \frac{1}{2} \frac{1}{\pp (x) -
1}$, \ then $\left[ 1 - \delta (x) (\pp (x) - 1 ) \right] =
\frac{1}{2}$ in $\Om$, and the previous inequality {\eqref{eq:gencasetemp1}}
specializes to
\begin{equation}
\int_{\Om} | u (x) |^{\pp (x)} \mathd x \leqslant \int_{\Om} |
\partial_{\sigma} u (x) |^{\pp (x)} 2^{\pp (x)} (\pp (x) - 1
)^{\pp (x) - 1} | (x - x_0) \cdot \sigma |^{\pp (x)} \mathd x.
\label{eq:gencasetemp2}
\end{equation}
Next, we note that if we remove the hypothesis that $\pp (\cdot) > 1$, i.e.,
if we assume that $\pp (\cdot)$ can also assume the value $1$, then for
every $\varepsilon > 0$, the previous inequality {\eqref{eq:gencasetemp2}}
holds with $\pp (\cdot)$ replaced by $\pp_{\varepsilon} (\cdot) \assign \pp
(\cdot) + \varepsilon$, and gives
\begin{equation}
\int_{\Om} | u (x) |^{\pp_{\varepsilon} (x)} \mathd x \leqslant
\int_{\Om} | \partial_{\sigma} u (x) |^{\pp_{\varepsilon} (x)}
2^{\pp_{\varepsilon} (x)} (\pp_{\varepsilon} (x) - 1
)^{\pp_{\varepsilon} (x) - 1} | (x - x_0) \cdot \sigma
|^{\pp_{\varepsilon} (x)} \mathd x. \label{eq:gencasetemp3}
\end{equation}
Since $u \in C^{\infty}_c (\Om)$, we can pass to the limit for
$\varepsilon \rightarrow 0^+$ to conclude that {\eqref{eq:gencasetemp2}}
holds even if $\pp (\cdot)$ is allowed to assume the value $1$ somewhere in
$\Om$, with the understanding that $(\pp (x) - 1 )^{\pp (x) - 1}
= 1$ whenever $\pp (\cdot)$ assumes the value $1$. Overall, if $\pp (\cdot)
\in C^1_b (\Om)$ is constant along the $\sigma$-direction,
$\sigma \in \Stwo^{N - 1}$, then
\begin{equation}
\int_{\Om} | u (x) |^{\pp (x)} \mathd x \leqslant \kappa (\pp) \int_{\Om} | \partial_{\sigma} u (x) |^{\pp (x)} | (x - x_0)
\cdot \sigma |^{\pp (x)} \mathd x. \label{eq:gencasetemp4}
\end{equation}
with $\kappa (\pp )$ given by {\eqref{eq:exprconstplog}}.
Finally, passing to the infimum over the point $x_0 \in \RR^{\Ndim}$ we get
{\eqref{eq:thvarLS1}}. This concludes the proof of {\tmem{i}} under the additional regularity assumption that $\pp(\cdot) \in C^1_b (\Om)$.

{\noindent}{\tmem{Proof of ii.}} By assumption, $\pp (x)
\assign \pp (| x - x_0 |)$ is radial with respect to the point $x_0 \in
\RR^{\Ndim}$, therefore
\[ \grad \pp (x) = \pp' (| x - x_0 |) \frac{x - x_0}{| x - x_0 |} . \]
Note that, to shorten notation, we identify $\pp (\cdot)$ with the
associated one-dimensional profile of $\pp (\cdot)$ formally defined by $t
\in \RR_+ \mapsto \pp (t e_N)$ with $e_N \in \Stwo^{N - 1}$. Its
derivative is here and hereafter denoted by the prime symbol. It follows
that
\[ \partial_{\sigma} \pp (x) (x - x_0) \cdot \sigma = \pp' (| x - x_0 |)
\frac{((x - x_0) \cdot \sigma)^2}{| x - x_0 |} . \]
First, we assume that $\pp (x) > 1$ for every $x \in \Om$. If the radial
profile of $\pp (\cdot)$ is decreasing and $| u | \leqslant 1$ then
\[ \left( \partial_{\sigma} \pp (x) (x - x_0) \cdot \sigma \right) (\log | u
(x) |) | u (x) |^{\pp (x)} \geqslant 0. \]
Therefore, from {\eqref{eq:gencase}}, we infer that
\begin{equation}
\int_{\Om} \left[ 1 - \delta (x) (\pp (x) - 1 ) \right] | u
(x) |^{\pp (x)} \mathd x  \leqslant  \int_{\Om} | \partial_{\sigma} u
(x) |^{\pp (x)} \left( \frac{1}{\delta (x)} \right)^{\pp (x) - 1} | (x -
x_0) \cdot \sigma |^{\pp (x)} \mathd x. \label{eq:gencasetemplog}
\end{equation}
As in the proof of {\tmem{i.}}, if we choose $\delta (x) \assign \frac{1}{2}
\frac{1}{\pp (x) - 1}$, then the previous inequality
{\eqref{eq:gencasetemplog}} specializes to
\begin{equation}
\int_{\Om} | u (x) |^{\pp (x)} \mathd x \; \leqslant \; \int_{\Om} |
\partial_{\sigma} u (x) |^{\pp (x)} 2^{\pp (x)} (\pp (x) - 1)^{\pp (x) - 1} | (x - x_0) \cdot \sigma |^{\pp (x)} \mathd x.
\label{eq:thmlog}
\end{equation}
As in the proof of {\tmem{i.}}, we can remove the hypothesis that $\pp
(\cdot) > 1$, noting that if $\pp (\cdot)$ also assumes the value $1$, then
for every $\varepsilon > 0$, the previous inequality $\pp (\cdot)$ replaced
by $\pp_{\varepsilon} (\cdot) \assign \pp (\cdot) + \varepsilon$, and a
limit process gives the validity of {\eqref{eq:thmlog}} in the general case
$\pp (\cdot) \geqslant 1$ in $\Om$.  Eventually, from {\eqref{eq:thmlog}} we get
\begin{equation}
\int_{\Om} | u (x) |^{\pp (x)} \mathd x \leqslant \kappa (\pp) \inf_{(x_0, \sigma) \in \RR^N \times \Stwo^{N - 1}} \int_{\Om} |
\partial_{\sigma} u (x) |^{\pp (x)} | (x - x_0) \cdot \sigma |^{\pp (x)}
\mathd x
\end{equation}
with $\kappa (\pp )$ given by {\eqref{eq:exprconstplog}}.
Finally, if $\Om$ is bounded in the $\sigma$-direction, then from
{\eqref{eq:thmlog}} we get
\begin{equation}
\int_{\Om} | u (x) |^{\pp (x)} \mathd x \leqslant \kappa (\pp) \inf_{x_0 \in \RR^N} \left( \sup_{x \in \Om} | (x - x_0) \cdot
\sigma |^{\pp (x)} \right) \int_{\Om} | \partial_{\sigma} u (x) |^{\pp
(x)} \mathd x.
\end{equation}
This concludes the proof of {\tmem{ii.}} under the additional regularity assumption that $\pp(\cdot) \in C^1_b (\Om)$.

{\noindent}{\tmem{Proof of i.~for $\pp(\cdot)\in L^\infty(\Om)$.}}
We want to show how the $C^1_b (\Omega)$ hypothesis of regularity on $p
(\cdot)$ can be weakened to $p (\cdot) \in L^{\infty} (\Omega)$.

We start by
showing that if $p (\cdot) \in L^{\infty} (\Omega)$ is constant in the
$\sigma$ direction for some $\sigma \in \Stwo^{N - 1}$, then
{\eqref{eq:thvarLS1}} still holds. For that, it is sufficient to prove that if
{\eqref{eq:gencasetemp2}} holds for any $p (\cdot) \in C^1_b (\Omega)$ which
is constant in the $\sigma$ direction, then it still holds for every $p
(\cdot) \in L^{\infty} (\Omega)$ which is constant along $\sigma$.

We argue as
follows. Since $u \in C^{\infty}_c (\Omega)$, the integrals on $\Omega$ in
{\eqref{eq:gencasetemp2}} can be replaced by integrals over a bounded open
subset $\mathcal{O}$ of $\Omega$ such that $\tmop{supp} u \subset \mathcal{O}
\subset \bar{\mathcal{O}} \subset \Omega$. In other words, for a given $u \in
C^{\infty}_c (\Omega)$ and for every $p_{\varepsilon} (\cdot) \in C^1_b
(\mathcal{O})$ which is constant along $\sigma$, there holds
({\tmabbr{cf.}}~{\eqref{eq:gencasetemp2}}):
\begin{equation}
  \int_{\mathcal{O}}  | u (x) |^{p_{\varepsilon} (x)} \mathd x \leqslant
  \int_{\mathcal{O}} | \partial_{\sigma} u (x) |^{p_{\varepsilon}}
  2^{p_{\varepsilon} (x)} (p (x) - 1)^{p_{\varepsilon} (x) - 1} | (x - x_0)
  \cdot \sigma |^{p_{\varepsilon} (x)} \mathd x. \label{eq:gencasetemp2new}
\end{equation}
Now, for any sufficiently small $\delta > 0$, the open $\delta$-neighborhood
of $\mathcal{O}$ defined by $\mathcal{O}_{\delta} \assign \{ x \in \Omega \of
d (x, \mathcal{O}) < \delta \}$ is still included in $\Omega$. Therefore, if
we set
\[ 
\tilde{p} (x) = \begin{cases}
p (x) & \text{if } x \in \mathcal{O}_{\delta}\ ,\\
     0 & \text{if } x \in \RR^N \setminus \mathcal{O}_{\delta} \, ,
\end{cases} 
\]
then $\tilde{p} (\cdot)$ is constant in $\mathcal{O}_{\delta}$ along $\sigma$,
and $\tilde{p} (\cdot) \in L^1 ( \RR^{\Ndim} )$.

Next, we regularize $\tilde{p} (\cdot)$. We consider a positive and symmetric
mollifier $\eta \in C^{\infty}_c \left( \RR^{\Ndim} \right)$, $\tmop{supp}
\eta \subseteq B_1$, $0 \leqslant \eta \leqslant 1$. As usual, for every
sufficiently small $\varepsilon > 0$, we set $\eta_{\varepsilon} (y) \assign
\varepsilon^{- \Ndim} \eta \left( \varepsilon^{- \Ndim} y \right)$. Also, for
every sufficiently small $\varepsilon > 0$, we define $\tilde{p}_{\varepsilon}
(x) \assign (\tilde{p} \ast \eta_{\varepsilon}) (x)$. The regularized exponent
$\tilde{p}_{\varepsilon} (\cdot)$ is in $C^1_b (\mathcal{O})$ and is constant
in the $\sigma$ direction (in $\mathcal{O}$). Indeed, for every $x \in
\mathcal{O}$ we have,
\begin{align}
  \tilde{p}_{\varepsilon} (x + (\delta / 2) \sigma) & =  \int_{\RR^{\Ndim}}
  \eta_{\varepsilon} (y - (x + (\delta / 2) \sigma))  \tilde{p} (y) \mathd y
  \nonumber\\
  & =  \int_{\RR^{\Ndim}} \eta (y)  \tilde{p} ((x + \varepsilon y) + (\delta
  / 2) \sigma) \mathd y \nonumber\\
  & \eqs  \int_{\RR^{\Ndim}} \eta (y)  \tilde{p} ((x + \varepsilon y))
  \mathd y \nonumber\\
  & \eqs  \tilde{p}_{\varepsilon} (x), \nonumber
\end{align}
at least for every $\varepsilon < (\delta / 2)$ so that $x + \varepsilon y \in
\mathcal{O}_{\delta}$ and $(x + \varepsilon y) + (\delta / 2) \sigma \in
\mathcal{O}_{\delta}$ for every $y \in B_1$. Also, $\tilde{p}_{\varepsilon}
(\cdot) \geqslant 1$ in $\mathcal{O}$ because $\tilde{p} (\cdot) \geqslant 1$
in $\mathcal{O}_{\delta}$ and $\| \eta \|_{L^1 \left( \RR^N \right)} = 1$.

After that, we know that $\tilde{p}_{\varepsilon} \rightarrow \tilde{p} (x)$
in $L^1 \left( \RR^{\NN} \right)$. In particular, up to a subsequence,
$\tilde{p}_{\varepsilon} \rightarrow p$ a.e. in $\mathcal{O}$. Since
{\eqref{eq:gencasetemp2new}} holds for every such $\tilde{p}_{\varepsilon}
(\cdot)$, by Lebesgue dominated convergence theorem, passing to the limit for
$\varepsilon \rightarrow 0$, we conclude that {\eqref{eq:gencasetemp2new}}
holds with $p_{\varepsilon} (\cdot)$ replaced by $p (\cdot)$.

{\noindent}{\tmem{Proof of ii.~for $\pp(\cdot)\in L^\infty(\Om)$.}}
We show that if
$p (\cdot) \in L^{\infty} (\Omega)$ is radial around $x_0 \in \RR^N$ then
{\eqref{eq:thmPoinclog0}} still holds.

For that, it is sufficient to prove that if {\eqref{eq:thmlog}} holds for any
$p (\cdot) \in C^1_b (\Omega)$ which is radial around $x_0 \in \RR^N$, then it
holds also for every $p (\cdot) \in L^{\infty} (\Omega)$ which is radial
around $x_0 \in \RR^N$. Note that {\eqref{eq:thmlog}} has the same expression
of {\eqref{eq:gencasetemp2}}.

As before, since $u \in C^{\infty}_c (\Omega)$, the integrals on $\Omega$ in
{\eqref{eq:thmlog}} can be replaced by integrals over a bounded open subset
$\mathcal{O}$ of $\Omega$ such that $\tmop{supp} u \subset \mathcal{O} \subset
\bar{\mathcal{O}} \subset \Omega$. In other words, for a given $u \in
C^{\infty}_c (\Omega)$ we know that for every $p_{\varepsilon} (\cdot) \in
C^1_b (\Omega)$ which is radial with respect to $x_0$ there holds:
\begin{equation}
  \int_{\mathcal{O}}  | u (x) |^{p_{\varepsilon} (x)} \mathd x \leqslant
  \int_{\mathcal{O}} | \partial_{\sigma} u (x) |^{p_{\varepsilon} (x)}
  2^{p_{\varepsilon} (x)} (p (x) - 1)^{p_{\varepsilon} (x) - 1} | (x - x_0)
  \cdot \sigma |^{p_{\varepsilon} (x)} \mathd x. \label{eq:gencasetemp2newii}
\end{equation}
Let $p (\cdot) \in L^{\infty} (\Omega)$ and denote by $\pi : \RR_+ \to
\RR_+$ the one-dimensional decreasing profile of $p (\cdot)$, i.e., $p (x) =
\pi (| x - x_0 |)$ for each $x \in \Omega$. Without loss of generality, we
assume that $\pi \geqslant 1$ is defined on $\RR_+$. We set $\mathcal{O} (x_0)
\assign \mathcal{O} \cup \{ x_0 \}$, and introduce the one-dimensional profile
$\tilde{\pi} : \RR \rightarrow \RR_+$, with compact support, obtained by
redefining $\pi (t)$ equal to $1$ if $t \in [\tmop{diam} \mathcal{O} (x_0), 2
\tmop{diam} \mathcal{O} (x_0))$,  equal to zero if $[2 \tmop{diam} \mathcal{O}
(x_0), + \infty)$, and then extending it by zero to the whole of $\RR$. We
regularize $\tilde{\pi} (\cdot) : \RR \rightarrow \RR_+$ by setting, for every
sufficiently small $\varepsilon > 0$, $\tilde{\pi}_{\varepsilon} (x) \assign
(\tilde{\pi} \ast \eta_{\varepsilon}) (x)$, where now $\eta_{\varepsilon}$ is
the one-dimensional analog of the symmetric mollifier we introduced before.
The regularized profile $\tilde{\pi}_{\varepsilon} (\cdot)$ is in $C^1_b
\left( \RR \right)$ and still decreasing on $\RR_+$. Indeed, for any $0
\leqslant t_1 \leqslant t_2$ we have
\begin{align}
  \tilde{\pi}_{\varepsilon} (t_1) & =  \int_{\RR} \eta_{\varepsilon} (s -
  t_1)  \tilde{\pi} (s) \mathd s 
  \; = \; \int_{\RR} \eta (s)  \tilde{\pi} (t_1 + \varepsilon s) \mathd s
  \nonumber\\
  & =  \int_{- t_1 / \varepsilon}^{+ \infty} \eta (s)  \tilde{\pi} (t_1 +
  \varepsilon s) \mathd s 
  \; \geqslant\;  \int_{- t_2 / \varepsilon}^{+ \infty} \eta (s)  \tilde{\pi}
  (t_2 + \varepsilon s) \mathd s \nonumber\\
  & =  \tilde{\pi}_{\varepsilon} (t_2) . \nonumber
\end{align}
After that, we know that $\tilde{\pi}_{\varepsilon} \rightarrow \tilde{\pi}
(x)$ in $L^1( \RR )$. In particular, up to a subsequence,
$\tilde{\pi}_{\varepsilon} \rightarrow \pi$ a.e. in $\mathcal{O}$. Note that,
by construction, $\tilde{\pi}_{\varepsilon} (\cdot) \geqslant 1$ in $[0,
\tmop{diam} \mathcal{O}]$ because of $\tilde{\pi} (\cdot) \geqslant 1$ in $[0,
2 \tmop{diam} \mathcal{O}]$ and $\| \eta \|_{L^1 \left( \RR \right)} = 1$.
Therefore, if we define $\tilde{p}_{\varepsilon} (x) =
\tilde{\pi}_{\varepsilon} (| x - x_0 |)$, we get that $\tilde{p}_{\varepsilon}
\rightarrow p$ a.e. in $\mathcal{O}$. Since {\eqref{eq:gencasetemp2newii}}
holds for every such $\tilde{p}_{\varepsilon} (\cdot)$, by Lebesgue dominated
convergence theorem, passing to the limit for $\varepsilon \rightarrow 0$, we
conclude that {\eqref{eq:gencasetemp2newii}} holds with $p (\cdot)$ replaced
by $\tilde{p} (\cdot)$.
\end{proof}

\begin{remark} \label{rmk:Fan}
We want to highlight the reason why our statement \tmem{ii}\@.~in Theorem~\ref{thm:VASBPI}  does not contradict the findings in \cite[Thm.~3.1]{Fan2005}. The key is in the logarithmic term in inequality {\eqref{eq:gencase}} that, when $\pp(\cdot)$ is radial with respect to $x_0$, reads under the form
\begin{equation} \label{eq:term:log}
\int_{\Om} \left[
\pp' (| x - x_0 |) \frac{((x - x_0) \cdot \sigma)^2}{| x - x_0 |} \log | u (x) |
\right]  | u (x) |^{\pp (x)} \mathd x \, .
\end{equation}
To make this term positive, either one assumes that $|u|\leqslant 1$ in $\Om$ and $\pp'(\cdot)$ negative, or that $\pp'(\cdot)$ is positive and $|u|\geqslant 1$ in $\Om$. Since we are working with functions with compact support in $\Om$ we are forced to restrict to the case $|u|\leqslant 1$.

Actually, a closer look at the proof of \tmem{ii}, shows that for an arbitrarily radial exponent $\pp (x) = \pp (| x - x_0 |)$  in $C^1_b(\Om)$ (i.e., not necessarily decreasing), the result still holds in the subset of $ C^\infty_c(\Om)$ consisting of functions that are less than or equal to $1$ where $\pp(\cdot)$ is decreasing, and greater than or equal to $1$ in the points where $\pp(\cdot)$ is increasing. In any case, in the same spirit of logarithmic Sobolev inequalities, as soon as one is not interested in having an inequality in which the lograithmic term \eqref{eq:term:log} is nonnegative, one can retain the term \eqref{eq:term:log} to obtain an inequality that holds for every radial exponent $u\in C^\infty_c(\Om)$ (i.e., regardless of any monotonicity assumption on $u$). 

The term \eqref{eq:term:log} also  explains why, for the case of increasing $\pp(\cdot)$, the construction in \cite[Thm.~3.1]{Fan2005} relies on functions that take values in the interval $[0,1]$. Indeed, this is the only possible choice if one wants to make the integrand in \eqref{eq:term:log} negative in sign so as to invalidate the Poincaré inequality.  

Finally, if one works in $C^\infty(\bar{\Om})$ then one can repeat the argument and focus on the case in which $|u|\leqslant 1$ in $\Om$. Of course, in this case, the divergence theorem produces a remainder term in the form of a surface integral. In fact, most of the results we presented still work in $C^\infty(\bar{\Om})$ and the resulting inequalities with a (surface) remainder have applications in the analysis of minimizers of energy functionals with boundary anisotropies.
\end{remark}

\section{Acknowledgment}
The authors acknowledge support from ESI, the Erwin Schr{\"o}dinger
International Institute for Mathematics and Physics in Wien, given in occasion
of the Workshop on New Trends in the Variational Modeling and Simulation of
Liquid Crystals held at ESI, in Wien, on December 2-6, 2019. The first author
acknowledges support from the Austrian Science Fund (FWF) through the special
research program Taming complexity in partial differential systems (Grant SFB
F65).

\bibliographystyle{siam} 
\bibliography{master}

\begin{thebibliography}{10}

\bibitem{Acosta2017}
{\sc G.~Acosta, M.~E. Cejas, and R.~G. Dur\'{a}n}, {\em Improved {P}oincar\'{e}
  inequalities and solutions of the divergence in weighted forms}, Annales
  Academi\ae Scientiarum Fennic\ae . Mathematica, 42 (2017), pp.~211--226.

\bibitem{Acosta2004}
{\sc G.~Acosta and R.~G. Dur\'{a}n}, {\em An optimal {P}oincar\'{e} inequality
  in {$L^1$} for convex domains}, Proceedings of the American Mathematical
  Society, 132 (2004), pp.~195--202.

\bibitem{Agmon2010}
{\sc S.~Agmon}, {\em Lectures on elliptic boundary value problems}, AMS Chelsea
  Publishing, Providence, RI, 2010.

\bibitem{Youssfi2019}
{\sc Y.~Ahmida and A.~Youssfi}, {\em Poincar\'{e}-type inequalities in
  {M}usielak spaces}, Annales Academi\ae Scientiarum Fennic\ae . Mathematica,
  44 (2019), pp.~1041--1054.

\bibitem{Allegretto2007}
{\sc W.~Allegretto}, {\em Form estimates for the {$p(x)$}-{L}aplacean},
  Proceedings of the American Mathematical Society, 135 (2007), pp.~2177--2185.

\bibitem{Attouch2014}
{\sc H.~Attouch, G.~Buttazzo, and G.~Michaille}, {\em Variational analysis in
  {S}obolev and {BV} spaces}, vol.~17 of MOS-SIAM Series on Optimization,
  Society for Industrial and Applied Mathematics (SIAM), Philadelphia, PA,
  second~ed., 2014.

\bibitem{Avkhadiev2007}
{\sc F.~G. Avkhadiev and K.-J. Wirths}, {\em Unified {P}oincar\'{e} and {H}ardy
  inequalities with sharp constants for convex domains}, ZAMM. Zeitschrift
  f\"{u}r Angewandte Mathematik und Mechanik. Journal of Applied Mathematics
  and Mechanics, 87 (2007), pp.~632--642.

\bibitem{Barbatis2004}
{\sc G.~Barbatis, S.~Filippas, and A.~Tertikas}, {\em A unified approach to
  improved {$L^p$} {H}ardy inequalities with best constants}, Transactions of
  the American Mathematical Society, 356 (2004), pp.~2169--2196.

\bibitem{Blanchet2007}
{\sc A.~Blanchet, M.~Bonforte, J.~Dolbeault, G.~Grillo, and J.-L. V\'{a}zquez},
  {\em Hardy-{P}oincar\'{e} inequalities and applications to nonlinear
  diffusions}, Comptes Rendus Math\'{e}matique. Acad\'{e}mie des Sciences.
  Paris, 344 (2007), pp.~431--436.

\bibitem{Brezis2011}
{\sc H.~Brezis}, {\em Functional analysis, {S}obolev spaces and partial
  differential equations}, Universitext, Springer, New York, 2011.

\bibitem{Brezis1997}
{\sc H.~Brezis and M.~Marcus}, {\em Hardy's inequalities revisited}, Annali
  della Scuola Normale Superiore di Pisa. Classe di Scienze. Serie IV, 25
  (1997), pp.~217--237 (1998).
\newblock Dedicated to Ennio De Giorgi.

\bibitem{Brezis2000}
{\sc H.~Brezis, M.~Marcus, and I.~Shafrir}, {\em Extremal functions for
  {H}ardy's inequality with weight}, Journal of Functional Analysis, 171
  (2000), pp.~177--191.

\bibitem{Chou1993}
{\sc K.~S. Chou and C.~W. Chu}, {\em On the best constant for a weighted
  {S}obolev-{H}ardy inequality}, Journal of the London Mathematical Society.
  Second Series, 48 (1993), pp.~137--151.

\bibitem{Ciarlet2011}
{\sc P.~G. Ciarlet and G.~Dinca}, {\em A {P}oincar\'{e} inequality in a
  {S}obolev space with a variable exponent}, Chinese Annals of Mathematics.
  Series B, 32 (2011), pp.~333--342.

\bibitem{CruzUribe2018}
{\sc D.~Cruz-Uribe, G.~Di~Fratta, and A.~Fiorenza}, {\em Modular inequalities
  for the maximal operator in variable {L}ebesgue spaces}, Nonlinear Analysis.
  Theory, Methods \& Applications. An International Multidisciplinary Journal,
  177 (2018), pp.~299--311.

\bibitem{CruzUribe2014}
{\sc D.~V. Cruz-Uribe and A.~Fiorenza}, {\em Variable Lebesgue Spaces},
  Springer Basel, 2013.

\bibitem{Dautray1990}
{\sc R.~Dautray and J.-L. Lions}, {\em Mathematical analysis and numerical
  methods for science and technology. {V}ol. 4}, Springer-Verlag, Berlin, 1990.

\bibitem{Deny1954}
{\sc J.~Deny and J.~L. Lions}, {\em Espaces de {B}eppo {L}evi et applications},
  Comptes Rendus Hebdomadaires des S\'{e}ances de l'Acad\'{e}mie des Sciences,
  239 (1954), pp.~1174--1177.

\bibitem{Deny1955}
{\sc J.~Deny and J.~L. Lions}, {\em Les espaces du type de {B}eppo {L}evi},
  Universit\'{e} de Grenoble. Annales de l'Institut Fourier, 5 (1955),
  pp.~305--370.

\bibitem{DiFratta2016}
{\sc G.~Di~Fratta}, {\em The {N}ewtonian potential and the demagnetizing
  factors of the general ellipsoid}, Proceedings of the Royal Society A, 472
  (2016), pp.~20160197, 7.

\bibitem{DiFratta2020}
{\sc G.~Di~Fratta, C.~B. Muratov, F.~N. Rybakov, and V.~V. Slastikov}, {\em
  Variational principles of micromagnetics revisited}, SIAM Journal on
  Mathematical Analysis, 52 (2020), pp.~3580--3599.

\bibitem{Di_Fratta_2012}
{\sc G.~Di~Fratta, C.~Serpico, and M.~d'Aquino}, {\em A generalization of the
  fundamental theorem of {B}rown for fine ferromagnetic particles}, Physica B:
  Condensed Matter, 407 (2012), pp.~1368--1371.

\bibitem{DiFratta2019}
{\sc G.~Di~Fratta, V.~Slastikov, and A.~Zarnescu}, {\em On a sharp
  {P}oincar\'{e}-type inequality on the 2-sphere and its application in
  micromagnetics}, SIAM Journal on Mathematical Analysis, 51 (2019),
  pp.~3373--3387.

\bibitem{Diening2011}
{\sc L.~Diening, P.~Harjulehto, P.~H\"{a}st\"{o}, and M.~R{u}\v{z}i\v{c}ka},
  {\em Lebesgue and {S}obolev spaces with variable exponents}, vol.~2017 of
  Lecture Notes in Mathematics, Springer, Heidelberg, 2011.

\bibitem{Dolbeault2012}
{\sc J.~Dolbeault and B.~Volzone}, {\em Improved {P}oincar\'{e} inequalities},
  Nonlinear Analysis. Theory, Methods \& Applications. An International
  Multidisciplinary Journal, 75 (2012), pp.~5985--6001.

\bibitem{Edmunds2004}
{\sc D.~E. Edmunds and W.~D. Evans}, {\em Hardy operators, function spaces and
  embeddings}, Springer Monographs in Mathematics, Springer-Verlag, Berlin,
  2004.

\bibitem{Edmunds2014}
{\sc D.~E. Edmunds, J.~Lang, and O.~M\'{e}ndez}, {\em Differential operators on
  spaces of variable integrability}, World Scientific Publishing Co. Pte. Ltd.,
  Hackensack, NJ, 2014.

\bibitem{Fan2005}
{\sc X.~Fan, Q.~Zhang, and D.~Zhao}, {\em Eigenvalues of {$p(x)$}-{L}aplacian
  {D}irichlet problem}, Journal of Mathematical Analysis and Applications, 302
  (2005), pp.~306--317.

\bibitem{Fan2001}
{\sc X.~Fan and D.~Zhao}, {\em On the spaces {$L^{p(x)}(\Omega)$} and
  {$W^{m,p(x)}(\Omega)$}}, Journal of Mathematical Analysis and Applications,
  263 (2001), pp.~424--446.

\bibitem{FioCategories2019}
{\sc A.~Fiorenza}, {\em Categories of results in variable {L}ebesgue spaces
  theory}, Rendiconto dell' Accademia delle Scienze Fisiche e Matematiche di
  Napoli, 86 (2019), pp.~79--102.
\newblock https://doi.org/10.32092/1019.

\bibitem{Fiomodulars2021}
{\sc A.~Fiorenza}, {\em Modulars from {N}akano onwards}, Constructive
  Mathematical Analysis, 4 (2021), pp.~145--178.
\newblock https://doi.org/10.33205/cma.853108.

\bibitem{FioRemovability2020}
{\sc A.~Fiorenza and F.~Giannetti}, {\em Removability of zero modular capacity
  sets}, Revista Matematica Complutense,  (2020).
\newblock https://doi.org/10.1007/s13163-020-00361-z.

\bibitem{GarciaAzorero1998}
{\sc J.~P. Garc\'{\i}a~Azorero and I.~Peral~Alonso}, {\em Hardy inequalities
  and some critical elliptic and parabolic problems}, Journal of Differential
  Equations, 144 (1998), pp.~441--476.

\bibitem{Gazzola2004}
{\sc F.~Gazzola, H.-C. Grunau, and E.~Mitidieri}, {\em Hardy inequalities with
  optimal constants and remainder terms}, Transactions of the American
  Mathematical Society, 356 (2004), pp.~2149--2168.

\bibitem{Hardy1901}
{\sc G.~H. Hardy}, {\em Notes on some points in the integral calculus},
  Messenger of Mathematics, 30 (1901), pp.~185--90.

\bibitem{Hardy1952}
{\sc G.~H. Hardy, J.~E. Littlewood, and G.~P\'{o}lya}, {\em Inequalities},
  Cambridge, at the University Press, 1952.
\newblock 2d ed.

\bibitem{Hasto2019}
{\sc P.~Harjulehto and P.~H\"{a}st\"{o}}, {\em Orlicz spaces and generalized
  {O}rlicz spaces}, vol.~2236 of Lecture Notes in Mathematics, Springer, Cham,
  2019.

\bibitem{Kova1991}
{\sc O.~Kov\'{a}\v{c}ik and J.~R\'{a}kosn\'{\i}k}, {\em On spaces {$L^{p(x)}$}
  and {$W^{k,p(x)}$}}, Czechoslovak Mathematical Journal, 41(116) (1991),
  pp.~592--618.

\bibitem{Kufner2006}
{\sc A.~Kufner, L.~Maligranda, and L.-E. Persson}, {\em The prehistory of the
  {H}ardy inequality}, American Mathematical Monthly, 113 (2006), pp.~715--732.

\bibitem{Kufner2017}
{\sc A.~Kufner, L.-E. Persson, and N.~Samko}, {\em Weighted inequalities of
  {H}ardy type}, World Scientific Publishing Co. Pte. Ltd., Hackensack, NJ,
  second~ed., 2017.

\bibitem{Kuznetsov2014}
{\sc N.~Kuznetsov, T.~Kulczycki, M.~Kwa\'{s}nicki, A.~Nazarov, S.~Poborchi,
  I.~Polterovich, and B.~o. Siudeja}, {\em The legacy of {V}ladimir
  {A}ndreevich {S}teklov}, Notices of the American Mathematical Society, 61
  (2014), pp.~9--22.

\bibitem{Kuznetsov2015}
{\sc N.~Kuznetsov and A.~Nazarov}, {\em Sharp constants in the {P}oincar\'{e},
  {S}teklov and related inequalities (a survey)}, Mathematika. A Journal of
  Pure and Applied Mathematics, 61 (2015), pp.~328--344.

\bibitem{Leoni2017}
{\sc G.~Leoni}, {\em A first course in {S}obolev spaces}, vol.~181 of Graduate
  Studies in Mathematics, American Mathematical Society, Providence, RI,
  second~ed., 2017.

\bibitem{Leray1933}
{\sc J.~Leray}, {\em \'{E}tude de diverses \'{e}quations int\'{e}grales non
  lin\'{e}aires et de quelques probl\`emes que pose l'hydrodynamique},
  Gauthier-Villars, 1933.

\bibitem{MR1611383}
{\sc J.-L. Lions}, {\em Espaces de {B}eppo {L}evi et quelques applications},
  S\'{e}minaire Bourbaki, Vol. 3,  (1956), pp.~169--181.

\bibitem{Maeda2008}
{\sc F.-Y. Maeda}, {\em Poincar\'{e} type inequalities for variable exponents},
  JIPAM. Journal of Inequalities in Pure and Applied Mathematics, 9 (2008),
  pp.~Article 68, 5.

\bibitem{Mercaldo2013}
{\sc A.~Mercaldo, J.~D. Rossi, S.~Segura~de Le\'{o}n, and C.~Trombetti}, {\em
  Behaviour of {$p$}-{L}aplacian problems with {N}eumann boundary conditions
  when {$p$} goes to 1}, Commun. Pure Appl. Anal., 12 (2013), pp.~253--267.

\bibitem{Michlin1981}
{\sc S.~G. Mikhlin}, {\em Konstanten in einigen {U}ngleichungen der
  {A}nalysis}, vol.~35 of Teubner-Texte zur Mathematik [Teubner Texts in
  Mathematics], BSB B. G. Teubner Verlagsgesellschaft, Leipzig, 1981.
\newblock Translated from the Russian by Reinhard Lehmann, With English, French
  and Russian summaries.

\bibitem{Muckenhoupt1972}
{\sc B.~Muckenhoupt}, {\em Hardy's inequality with weights}, Polska Akademia
  Nauk. Instytut Matematyczny. Studia Mathematica, 44 (1972), pp.~31--38.

\bibitem{Naumann2010}
{\sc J.~Naumann}, {\em Notes on the prehistory of {S}obolev spaces}, Boletim da
  Sociedade Portuguesa de Matem\'{a}tica,  (2010), pp.~13--55.

\bibitem{Payne1960}
{\sc L.~E. Payne and H.~F. Weinberger}, {\em An optimal {P}oincar\'{e}
  inequality for convex domains}, Archive for Rational Mechanics and Analysis,
  5 (1960), pp.~286--292 (1960).

\bibitem{Poincare1890}
{\sc H.~Poincar\'{e}}, {\em Sur les {E}quations aux {D}erivees {P}artielles de
  la {P}hysique {M}athematique}, American Journal of Mathematics, 12 (1890),
  pp.~211--294.

\bibitem{Ruzhansky2019}
{\sc M.~Ruzhansky and D.~Suragan}, {\em Hardy inequalities on homogeneous
  groups}, vol.~327 of Progress in Mathematics, Birkh\"{a}user/Springer, Cham,
  2019.
\newblock 100 years of Hardy inequalities.

\bibitem{Salsa2016}
{\sc S.~Salsa}, {\em Partial differential equations in action}, vol.~99 of
  Unitext, Springer, third~ed., 2016.
\newblock From modelling to theory, La Matematica per il 3+2.

\bibitem{Takahashi2015}
{\sc F.~Takahashi}, {\em A simple proof of {H}ardy's inequality in a limiting
  case}, Archiv der Mathematik, 104 (2015), pp.~77--82.

\bibitem{Vazquez2000}
{\sc J.~L. Vazquez and E.~Zuazua}, {\em The {H}ardy inequality and the
  asymptotic behaviour of the heat equation with an inverse-square potential},
  Journal of Functional Analysis, 173 (2000), pp.~103--153.

\bibitem{Verfuerth1999}
{\sc R.~Verf\"{u}rth}, {\em A note on polynomial approximation in {S}obolev
  spaces}, M2AN. Mathematical Modelling and Numerical Analysis, 33 (1999),
  pp.~715--719.

\bibitem{Zhikov2011}
{\sc V.~V. Zhikov}, {\em On variational problems and nonlinear elliptic
  equations with nonstandard growth conditions}, Journal of Mathematical
  Sciences (New York), 173 (2011), pp.~463--570.
\newblock Problems in mathematical analysis. No. 54.

\bibitem{Ziemer89}
{\sc W.~P. Ziemer}, {\em Weakly differentiable functions}, vol.~120 of Graduate
  Texts in Mathematics, Springer-Verlag, New York, 1989.
\newblock Sobolev spaces and functions of bounded variation.

\end{thebibliography}

\end{document}